\setlist[enumerate]{leftmargin=25pt}
\setlist[itemize]{leftmargin=25pt}
\newtheorem{thm}{Theorem}[section]
\newtheorem{lemma}[thm]{Lemma}
\newtheorem{prop}[thm]{Proposition}
\newtheorem{cor}[thm]{Corollary}
\newtheorem{conj}[thm]{Conjecture}
\newtheorem{ques}[thm]{Question}
\theoremstyle{definition}
\newtheorem*{rem}{Remark}
\newtheorem*{notation}{Notation}
\theoremstyle{definition}
\newtheorem{defn}[thm]{Definition}
\DeclareMathOperator{\ord}{ord}
\newcommand{\Tr}{\ensuremath{\operatorname{Tr}}}
\newcommand{\Norm}{\ensuremath{\operatorname{N}}}
\newcommand{\bra}[1]{{\left({#1}\right)}}
\newcommand{\set}[1]{{\left\{{#1}\right\}}}
\DeclarePairedDelimiter\abs{\lvert}{\rvert}
\newcommand{\al}{\ensuremath{\alpha}}
\newcommand{\ga}{\ensuremath{\gamma}}
\newcommand{\Ga}{\ensuremath{\Gamma}}
\newcommand{\ze}{\ensuremath{\zeta}}
\newcommand{\sig}{\ensuremath{\sigma}}
\newcommand{\vphi}{\ensuremath{\varphi}}
\newcommand{\om}{\ensuremath{\omega}}
\newcommand{\mc}[1]{\ensuremath{\mathcal{#1}}}
\newcommand{\ZZ}{\ensuremath{\mathbb{Z}}}
\newcommand{\FF}{\ensuremath{\mathbb{F}}}
\newcommand{\QQ}{\ensuremath{\mathbb{Q}}}
\newcommand{\NN}{\ensuremath{\mathbb{N}}}
\newcommand{\CC}{\ensuremath{\mathbb{C}}}
\newcommand{\Gal}{\ensuremath{\textnormal{Gal}}}
\newcommand{\ssq}{\ensuremath{\subseteq}}
\newcommand{\mf}{\ensuremath{\mathfrak}}
\newcommand{\ol}{\ensuremath{\overline}}
\newcommand{\bs}[1]{\ensuremath{\mathbf{#1}}}
\newcommand{\out}[1]{}
\numberwithin{equation}{section}
\title[Principal minors of Fourier matrices of square-free order]{Principal minors of Fourier matrices\\ of square-free order}
\author[A. Caragea]{Andrei Caragea}
\address{Mathematical Institute for Machine Learning and Data Science\\
		 Katholische Universit\"at Eichstätt-Ingolstadt\\
         Germany}
\email{andrei.caragea@gmail.com}
\author[D. G. Lee]{Dae Gwan Lee}
\address{Department of Applied Mathematics\\
		 Pukyong National University\\
         South Korea}
\email{daegwan@pknu.ac.kr}
\author[R. D. Malikiosis]{Romanos Diogenes Malikiosis}
\address{Department of Mathematics\\
		 Aristotle University of Thessaloniki\\
         Greece}
\email{rwmanos@gmail.com}
\author[G. E. Pfander]{G\"otz E. Pfander}
\address{Mathematical Institute for Machine Learning and Data Science\\
		 Katholische Universit\"at Eichstätt-Ingolstadt\\
         Germany}
\email{Pfander@ku.de}
\thanks{R. D. Malikiosis acknowledges that this project is carried out within the framework of the National Recovery and Resilience Plan Greece 2.0, funded by the European Union, NextGenerationEU (Implementation body: HFRI, Project Name: HANTADS, No. 14770).
D.~G.~Lee is supported by the National Research Foundation of Korea (NRF) grant funded by the Korean government (MSIT) (RS-2023-00275360). A. Caragea and G. Pfander are supported by the German Research Foundation (DFG) grants PF 450/11-1 and CA 3683/1-1. The authors would like to thank Simon Weinzierl for helpful comments and corrections. 
All authors contributed equally to this work.}
\keywords{Chebotar\"ev's theorem, Fourier minors, hierarchical Riesz bases.}
\subjclass{42C15, 42A99, 11R18}
\date{\today}
\begin{document}
\begin{abstract}
    Chebotar\"ev's theorem on roots of unity states that all minors of a Fourier matrix are non-zero if and only if the order of the matrix is prime.  We establish cases in which all principal minors of Fourier matrices of square-free order are non-zero.  In a subsequent paper we discuss the case of composites containing squares.  
%    Chebotar\"ev's Fourier minor theorem states that all minors of  Fourier matrices are nonzero if and only if its order is prime.  We establish cases in which  all principal minors of Fourier matrices of square free order are nonzero.  In a subsequent paper we discuss composites containing squares.  
\end{abstract}
\maketitle
%\tableofcontents

\section{Introduction and main results}\bigskip\bigskip

A fundamental result on the order $N$ \emph{Fourier  matrix} $\mc{F}_N=(\om^{ij})_{0\leq i,j\leq N-1}$ with $\om=e^{-2\pi i/N}$ is attributed to Chebotarev  \cite{Chebotarev} and states
\begin{thm}\label{thm:cheb}
    All minors of $\mc{F}_N$ are non-zero exactly if $N$ is prime.
\end{thm}

%is the $N$-th order \emph{Fourier  matrix}. It is a matrix representation of the \emph{discrete Fourier transform}  on complex-valued vectors of length $N$ and $\frac{1}{\sqrt{N}}\mc{F}_N$ is unitary.% and of order $4$,  
%$\frac{1}{N^2}\mathcal F_N^4$ 
%meaning that $(\frac{1}{\sqrt{N}}\mc{F}_N)^4$ is equal to the identity matrix. [the order 4 part is not relevant in the paper, I would cut this.]

%A fundamental question is to determine which minors of $\mc{F}_N$ are nonzero, i.e., which  square submatrices of $\mc{F}_N$ have nonzero determinants. The widely used theorem of Chebotar\"ev asserts  that all minors are nonzero exactly if $N$ is prime. 
%Tao \cite{TaoUncertainty05} phrased this result for Fourier matrices in terms of an uncertainty principle.
%A full classification of the non-zero minors of a Fourier matrix $\mathcal F_n$ for composite $n$ is difficult.  

In this paper, we address the following conjecture \cite{CMN24,CL22,CL24}. 

\begin{conj}\label{conj:squarefree}
     All principal minors of $\mc{F}_N$ are non-zero exactly if $N$ is square-free.
\end{conj}

%Recall that a number $N\in\NN$ is \emph{square-free} if it is not divisible by $n^2$ for any $n\in\NN$. 
Recall that a number $N\in\NN$ is \emph{square-free} if it is divisible by no square number other than $1$. 
Necessity of $N$ being square-free follows from considering the principal minor with row and column indices $0$ and $pm$ if $N=p^2m$ for $p>1$ and $m\geq 1$.
%Recall that $N\in\NN$ is \emph{square-free} if it is not divisible by $n^2$ for any $n\in\NN$. 
%Necessity of $N$ being square-free follows from considering the principal minor with row and column indices $1$ and $pM$ if $N=p^2M$ for $p>1$.

%In this paper we address 
%\begin{conj}\label{conj:squarefree}
%    All principal minors of $\mc{F}_N$, where $N$ is square-free, are nonzero.
%\end{conj}

Conjecture~\ref{conj:squarefree} was formulated independently by Caragea and Lee, based on observations from the construction of hierarchical exponential Riesz bases \cite{CL22,CL24}, and by Cabrelli et al.~\cite{CMN24}, who studied necessary and sufficient conditions for bases in finite-dimensional Hilbert spaces to be woven, arriving at the conjecture through its application to Fourier bases.
Since then, at least two other groups have independently made progress towards confirming this conjecture, see Section~\ref{sec:literature} for details.

Our first result in support of Conjecture \ref{conj:squarefree} relates to $N$ being a small multiple of a prime. 

\begin{thm} \label{thm:smallmultp}
If $N$ is square-free and of the form $2p,3p,5p,6p$ or $7p$ for  $p$ prime, then all principal minors of $\mc{F}_N$ are non-zero. 
\end{thm}

Theorem \ref{thm:smallmultp} follows from Corollary \ref{cor:smallmultp} and Proposition \ref{prop:N-6p} below. Note that the smallest square free $N$ for which Conjecture~\ref{conj:squarefree} is not resolved positively is, for two factors, $11 \times 13=143$, for three factors, $2 \times 5 \times 7=70$ and, for four factors, $2 \times 3 \times 5 \times 7=210$, all of which cannot be addressed numerically as $\mathcal{F}_N$ has $2^N$ principal minors. (Arguments relating to symmetries can be applied to reduce the number of minors that have to be checked, allowing for numerical exploration only up to approximately $N=45$.)

Central to our analysis is the following tool that is established below.

\begin{thm}\label{thm:inductionstep}
Let $N=pN'$ be a square-free number with $p \in \NN$ prime and $N' \in \NN$ such that all principal minors of $\mc{F}_{N'}$ are non-zero in characteristic $p$. Then, any $N'$-principal minor (and thus any principal minor) of $\mc{F}_N$ is non-zero.
%Let $N$ be square-free and let $p$ be a prime dividing $N$, such that all principal minors of $\mc{F}_{N'}$ are nonzero in characteristic $p$, where $N=pN'$. Then, any $N'$-principal minor of $\mc{F}_N$ is nonzero.
\end{thm}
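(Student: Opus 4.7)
The plan is to adapt Tao's $p$-adic proof of Chebotar\"ev's theorem to the composite setting $N=pN'$ using the Chinese Remainder Theorem. First I would write $\om = e^{-2\pi i/N}$ and decompose it as $\om = \om_{N'}^{\,s}\, \om_p^{\,t}$, where $\om_p := \om^{N'}$ and $\om_{N'} := \om^p$ are primitive $p$-th and $N'$-th roots of unity respectively, and where $s,t \in \ZZ$ satisfy $sp + tN' = 1$. Then fix a prime ideal $\mathfrak{p}$ of the cyclotomic ring $\ZZ[\om]$ lying above $p$. Since $1-\om_p$ has norm $p$ over $\QQ$, it lies in $\mathfrak{p}$, so $\om_p \equiv 1 \pmod{\mathfrak{p}}$. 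On the other hand, $\gcd(p,N')=1$ makes $x^{N'}-1$ separable over $\FF_p$, so the image $\ol{\om_{N'}}$ in the residue field $k = \ZZ[\om]/\mathfrak{p}$ (of characteristic $p$) is still a primitive $N'$-th root of unity.

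The key step is to reduce an $N'$-principal minor $\det\mc{F}_N[I,J]$ modulo $\mathfrak{p}$. Each entry $\om^{ij}$ becomes $\ol{\om_{N'}}^{\,s\,ij}$, which depends on $i,j$ only through the CRT projection $\phi \colon \Zn \to \ZZ/N'\ZZ$. The defining feature of an $N'$-principal minor (Definition~\ref{defn:minors}) is precisely that $\phi$ restricts to bijections $\phi|_I$ and $\phi|_J$ onto a common image $I' \ssq \ZZ/N'\ZZ$. Thus, after a row/column permutation and a sign, the mod-$\mathfrak{p}$ reduction identifies with the principal minor of $\mc{F}_{N'}$ indexed by $I'$, assembled using the primitive $N'$-th root $\ol{\om_{N'}}^{\,s}$ (primitivity following from $sp \equiv 1 \pmod{N'}$, hence $\gcd(s,N')=1$). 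By the hypothesis of the theorem, this principal minor of $\mc{F}_{N'}$ is nonzero in $k$, so $\det\mc{F}_N[I,J] \notin \mathfrak{p}$ and in particular is nonzero in $\ZZ[\om]$. Ordinary principal minors slot into the $N'$-principal class as the case $J=I$, which accounts for the parenthetical.

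The main obstacle will be the bookkeeping in the reduction step: ensuring that the mod-$\mathfrak{p}$ matrix identifies cleanly with a genuine principal minor of $\mc{F}_{N'}$, rather than a degenerate submatrix with coincident rows or columns. The definition of ``$N'$-principal'' is crafted precisely to encode the index-set compatibility that makes the CRT projection hand back a bona fide principal minor, so the conceptual weight of the proof lies in matching the right combinatorial condition on $(I,J)$ to the reduction. A minor additional check is that the twist factor $s$ preserves primitivity, which is immediate from $s$ being the modular inverse of $p$ modulo $N'$. Once these alignments are in place the $p$-adic reduction runs cleanly and the theorem reduces to the characteristic-$p$ hypothesis on $\mc{F}_{N'}$.
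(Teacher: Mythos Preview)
Your proposal has a genuine gap stemming from a misreading of Definition~\ref{defn:minors}. An $N'$-principal submatrix is one for which $\abs{I_{N',i}}=\abs{J_{N',i}}$ for every residue $i\in\ZZ_{N'}$; it does \emph{not} require these common cardinalities to be at most $1$. In particular, the projection $\phi:\Zn\to\ZZ/N'\ZZ$ need not be injective on $I$ or on $J$. Since $\abs{I}$ can be as large as $N=pN'$, the fibers $I_{N',i}$ may have size up to $p$, and for any principal minor with $\abs{I}>N'$ at least one fiber has size $\geq 2$ by pigeonhole. Whenever two indices $k,k'\in I$ share a residue modulo $N'$, your reduction modulo $\mf{p}$ produces two identical rows (since $\om^{kj}\equiv\om^{k'j}\pmod{\mf{p}}$ for all $j$), so the reduced determinant vanishes and the argument gives no information. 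A concrete instance: take $N=6$, $p=3$, $N'=2$, and $I=J=\set{0,1,2}$; here $0\equiv 2\pmod 2$, so rows $0$ and $2$ (and columns $0$ and $2$) collapse after reduction.

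This collision is exactly what the paper's proof is built to handle, and it is where the adaptation of Tao's argument actually does its work. One writes the minor as a polynomial $D(z_1,\dotsc,z_M)$ evaluated at $z_j=\om^{\ell_j}$, factors out the Vandermonde $\prod_{i<j}(z_j-z_i)$, and studies the quotient polynomial $P$. The vanishing of $D_{\mc M,\mc L}$ would force $\Phi_N\mid P(z^{\ell_1},\dotsc,z^{\ell_M})$, hence $p\mid P$ evaluated at $z=\ze=\om^p$, where all $z_{a,i}$ with a common residue $a$ collapse to the single value $\ze^a$. To compute $P$ at this collapsed point one applies the differential operator $\prod_a\prod_j (z_{a,j}\partial/\partial z_{a,j})^{j-1}$ to both sides of the factorization and then specializes; this separates the coincident rows. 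The resulting block matrix is evaluated via Lemma~\ref{lem:blockdeterminant}, and the $\det Z_k$ factors that emerge are precisely principal minors of $\mc{F}_{N'}$ in characteristic $p$, which are nonzero by hypothesis. Your naive reduction recovers only the trivial case where every $\abs{I_{N',i}}\leq 1$; the substance of the theorem lies in the higher-multiplicity case that your outline does not address.
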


%Theorem \ref{thm:inductionstep} provides an inductive process to construct square-free $N$, for which Conjecture \ref{conj:squarefree} holds, and this family of $N$ is infinite. 
%First, choose any prime $p_1$. Since the minors of $\mc{F}_{p_1}$ are nonzero by Chebotar\"ev's theorem, they would also be nonzero when considered in finite characteristic $p_2$, for all but finitely many primes $p_2$. For those $p_2$, all principal minors of $\mc{F}_{p_1p_2}$ are nonzero. Now, assuming we have selected the primes $p_1,p_2,\dotsc,p_{k-1}$, for $k\geq2$, such that the principal minors of $\mc{F}_{P_k}$ are nonzero, where $P_k=p_1p_2\dotsm p_{k-1}$, those same minors are also nonzero in characteristic $p_k$, for all but finitely many primes $p_k$. Theorem \ref{thm:inductionstep} implies that for those primes $p_k$, the principal minors of $\mc{F}_{p_1p_2\dotsm p_k}$ are nonzero.

%At each step, we may conclude that the principal minors of $\mc{F}_{p_1p_2\dotsm p_j}$ are nonzero, for all sufficiently large $p_j$, the bound depending on $p_1,\dotsc,p_{j-1}$. The following Theorem gives explicit bounds.

%It should be noted here that principal minors are a special case of $N'$-principal minors, a notion that appears in Definition \ref{defn:minors} below. 
Note that principal minors are a special case of $N'$-principal minors, a notion described in Definition \ref{defn:minors} below. A minor of the complex Fourier matrix being non-zero in characteristic $p$ is described in Section~\ref{subsec:basic-facts-cyclotomic-fields}.

Theorem~\ref{thm:inductionstep} provides an inductive process to construct an infinite family of $N$ for which Conjecture \ref{conj:squarefree} holds. 
First, consider any prime $p_1$. Since the minors of $\mc{F}_{p_1}$ are non-zero by Chebotar\"ev's theorem, they would also be non-zero when considered in finite characteristic $p_2$, for all but finitely many primes $p_2$. For all $p_2$ not in this finite exception set, all principal minors of $\mc{F}_{p_1p_2}$ are non-zero. Now, assuming that primes $p_1,p_2,\dotsc,p_{k-1}$ ($k\geq2$), are chosen so that all principal minors of $\mc{F}_{p_1p_2\dotsm p_{k-1}}$ are non-zero, we observe that the principal minors of $\mc{F}_{p_1p_2\dotsm p_{k-1}}$ are also non-zero in characteristic $p_k$, for all but finitely many primes $p_k$. Theorem \ref{thm:inductionstep} implies that for these primes $p_k$, all principal minors of $\mc{F}_{p_1p_2\dotsm p_k}$ are non-zero.

At each step, we may conclude that all principal minors of $\mc{F}_{p_1p_2\dotsm p_j}$ are non-zero, for all sufficiently large $p_j$, a condition that depends on $p_1,\dotsc,p_{j-1}$. 
%The following Theorem gives explicit bounds for $p_j$.
With $\vphi$ denoting Euler's totient function, the following theorem provides some explicit bounds for $p_j$.

\begin{thm}\label{thm:mainresult2}
      Let $N=p_1p_2\dotsm p_k$ for some primes $p_1<p_2<\dotsb<p_k$. 
      Assume that  
      %\[p_j>\bra{\frac{P_{j-1}}{2}}^{P_{j-1}\vphi(P_{j-1})/4} 
      %\quad \text{for} \;\; 2\leq j\leq k, \]
      \[p_{j+1}>\bra{\frac{P_{j}}{2}}^{P_{j}\vphi(P_{j})/4} 
      \quad \text{for} \;\; j = 1 , \ldots , k-1, \]
        where $P_j := p_1 p_2\dotsm p_j$.
    Then, all principal minors of $\mc{F}_N$ are non-zero.
\end{thm}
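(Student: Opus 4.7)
The argument is an induction on $k$, driven by Theorem~\ref{thm:inductionstep}. For $k = 1$, $\mc{F}_{p_1}$ has all principal minors nonzero by Chebotar\"ev's theorem. Assuming that all principal minors of $\mc{F}_{P_j}$ are nonzero (as complex numbers), I apply Theorem~\ref{thm:inductionstep} with $N = P_{j+1}$, $N' = P_j$, and $p = p_{j+1}$; the conclusion is that all principal minors of $\mc{F}_{P_{j+1}}$ are nonzero, provided that the principal minors of $\mc{F}_{P_j}$ also remain nonzero after reduction modulo $p_{j+1}$. Iterating for $j = 1, \ldots, k-1$ then yields the statement for $\mc{F}_N = \mc{F}_{P_k}$.

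Each principal minor $D$ of $\mc{F}_{P_j}$ lies in the cyclotomic ring $\ZZ[\ze_{P_j}]$, and is nonzero by induction, so its absolute norm $\lvert\Norm_{\QQ(\ze_{P_j})/\QQ}(D)\rvert$ is a positive integer. Since $\gcd(p_{j+1}, P_j) = 1$, $D$ vanishes modulo some prime of $\ZZ[\ze_{P_j}]$ above $p_{j+1}$ if and only if $p_{j+1}$ divides this norm. Consequently it is enough to prove that the hypothesis $p_{j+1} > (P_j/2)^{P_j \vphi(P_j)/4}$ forces $p_{j+1} > \lvert\Norm_{\QQ(\ze_{P_j})/\QQ}(D)\rvert$ for every principal minor $D$ of $\mc{F}_{P_j}$.

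The norm factors as $\lvert\Norm(D)\rvert = \prod_\sigma \lvert\sigma(D)\rvert$ over the $\vphi(P_j)$ Galois automorphisms $\sigma$ of $\QQ(\ze_{P_j})/\QQ$. Each $\sigma(D)$ is again the determinant of an $m \times m$ matrix whose entries are $P_j$-th roots of unity (it corresponds to a submatrix of a column-permuted Fourier matrix $(\om^{aij})$), so Hadamard's inequality gives $\lvert\sigma(D)\rvert \leq m^{m/2}$; this is already at most $(P_j/2)^{P_j/4}$ when $m \leq P_j/2$. For the larger minors ($m > P_j/2$), the strategy is to apply Jacobi's complementary-minor identity to the duality $\mc{F}_{P_j}^{-1} = \frac{1}{P_j}\,\overline{\mc{F}_{P_j}}$, reducing each such minor to the complementary $(P_j - m)$-sized principal minor -- where Hadamard now applies -- so that the same uniform bound $(P_j/2)^{P_j/4}$ holds. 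Multiplying over the $\vphi(P_j)$ conjugates yields $\lvert\Norm(D)\rvert \leq (P_j/2)^{P_j\vphi(P_j)/4}$, matching the hypothesis and closing the induction.

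The main obstacle is the uniform Hadamard-type estimate in the regime $m > P_j/2$: the crude Hadamard bound $m^{m/2}$ can be as large as $\lvert\det\mc{F}_{P_j}\rvert = P_j^{P_j/2}$, far exceeding the target $(P_j/2)^{P_j/4}$. Overcoming this requires genuine use of the unitarity of $P_j^{-1/2}\mc{F}_{P_j}$ together with the Jacobi-type duality between a principal minor and its complement, rather than any purely entrywise argument. Once this uniform bound is in place, the remaining steps -- the Galois-norm factorization, the comparison with the hypothesis, and the induction step through Theorem~\ref{thm:inductionstep} -- are routine.
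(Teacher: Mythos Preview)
Your overall architecture---induction on $j$ via Theorem~\ref{thm:inductionstep}, reducing the inductive step to showing $p_{j+1}\nmid\Norm(D)$ for each principal minor $D$ of $\mc{F}_{P_j}$, and handling this by a Hadamard bound together with the Jacobi complementary-minor identity---is exactly the paper's approach. There is, however, a genuine slip in how you deploy the complementarity.

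You assert that for $m>P_j/2$ Jacobi will give the uniform per-conjugate bound $\lvert\sigma(D)\rvert\le (P_j/2)^{P_j/4}$, and hence $\lvert\Norm(D)\rvert\le(P_j/2)^{P_j\vphi(P_j)/4}$. This is false. With $m'=P_j-m$ the Jacobi identity and Hadamard on the complement give only
\[
\lvert\sigma(D_{A,A})\rvert \;=\; P_j^{P_j/2-m'}\,\lvert\sigma(D_{A^C,A^C})\rvert \;\le\; P_j^{P_j/2-m'}(m')^{m'/2},
\]
and for small $m'$ this blows up: e.g.\ for $m'=1$ (the $(P_j{-}1)\times(P_j{-}1)$ principal minor) one has $\lvert\sigma(D)\rvert=P_j^{P_j/2-1}$, which for $P_j=6$ equals $36$, already larger than $(P_j/2)^{P_j/4}=3^{3/2}\approx5.2$. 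Correspondingly $\lvert\Norm(D)\rvert=36^{2}=1296>27=(P_j/2)^{P_j\vphi(P_j)/4}$. So the claim ``it is enough to prove $p_{j+1}>\lvert\Norm(D)\rvert$ for every principal minor $D$'' is not attainable and is not what is needed.

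The fix---and this is precisely what the paper does---is to use complementarity at the level of the divisibility question rather than as a size estimate. From Jacobi and $\mc{F}_{P_j}^{-1}=P_j^{-1}\overline{\mc{F}_{P_j}}$ one gets
\[
\Norm(D_{A,A}) \;=\; \pm\,P_j^{(m-P_j/2)\vphi(P_j)}\,\Norm(D_{A^C,A^C}),
\]
so, since $\gcd(p_{j+1},P_j)=1$, one has $p_{j+1}\mid\Norm(D_{A,A})$ iff $p_{j+1}\mid\Norm(D_{A^C,A^C})$. Equivalently (and this is the form the paper uses, via Proposition~\ref{prop:unitarycomplementarity} in characteristic $p_{j+1}$), one may assume from the outset that $m=\lvert A\rvert\le P_j/2$. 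For those minors your Hadamard bound $\lvert\Norm(D)\rvert\le m^{m\vphi(P_j)/2}\le(P_j/2)^{P_j\vphi(P_j)/4}<p_{j+1}$ goes through verbatim and the induction closes.
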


%\begin{thm}\label{thm:mainresult2}
%      Let $N=p_1p_2\dotsm p_k$, where $p_1<p_2<\dotsb<p_k$ are primes. For $j\geq2$, denote $P_j=p_1 p_2\dotsm p_{j-1}$. 
%      Suppose that for $2\leq j\leq k$, we have 
%        \[p_j>\bra{\frac{P_j}{2}}^{P_j\vphi(P_j)/4}.\]
%    Then, all principal minors of $\mc{F}_N$ are nonzero.
%\end{thm}

%Maybe table or picture that illustrates the state of the art

%\begin{table}
%%\resizebox{\textwidth}{!}{
%\input{Sections/matrix4}
%\medskip
%%}

%    \caption{The row index being $p$ and the column index $q$, table indicates by 1 if $q$ is $p$-primitive.  The green color indicates that Theorem~\ref{thm:inductionstep} implies that $\mathcal F_n$ for $n=pq$ has no zero principal minors.}
%    \label{tab:my_label}
%\end{table}
%\begin{align*}

%\end{align*}

%%These results apply to $n$ being the product of two distinct primes.  The case that $n$ is the product of three or more distinct primes will be left for future work.  For $n=2*3*5=30$ and $n=2*3*7=42$ we numerically showed the result. 
%If $N$ is divisible by the square of a prime, say $p^2$, then $\mc{F}_N$ has an obvious zero principal minor, namely the one whose rows and columns are indexed by $\frac{N}{p}\ZZ_N$ (this is actually the $p\times p$ matrix with all entries equal to $1$). 
%In fact, in \cite{CL24}, it is shown that if $N$ is divisible by the square of a prime, then $\mc F_N$ has principal submatrices of size $r\times r$ with determinant zero for all $r$ between $2$ and $N-2$. 
%If $N$ is divisible by $p^2$ where $p$ is a prime, then $\mc{F}_N$ has an obvious zero principal minor, 
As mentioned above, if $N$ is divisible by $p^2$ for some prime $p$, then $\mc{F}_N$  has zero principal minors, in fact, for each $2 \leq r \leq N-2$ there exists an $r\times r$ principal submatrix of $\mc F_N$ whose determinant is zero (see \cite{CL24}). 
%As mentioned above, if $N$ is divisible by $p^2$ for some prime $p$, then $\mc{F}_N$  has zero principal minors. 
%In fact, it is shown in \cite{CL24} that if $N$ is divisible by the square of a prime, then $\mc F_N$ has principal submatrices of size $r\times r$ with determinant zero for all $r$ between $2$ and $N-2$. 

Conjecture \ref{conj:squarefree} may thus be considered as a special case of the following more general question posed in \cite{CL24}.

\begin{ques}\label{ques:generalpermutation}
For which $N\in\NN$ does there exist a permutation matrix $P$ such that all principal minors of the column permuted Fourier matrix $\mc{F}_N \, P$ are non-zero?
%For which $N\in\NN$ does there exist a permutation of the columns of $\mc{F}_N$, such that all principal minors of the resulting matrix are nonzero?
%For which $N\in\NN$ can one find a permutation (depending on $N$) of the columns of $\mc{F}_N$, such that all the principal minors of the resulting matrix are nonzero?
\end{ques}

%The above discussion illustrates the fact that in the case of $N$ divisible by the square of a prime, one must look for non-trivial permutations. 
The above discussion indicates that when $N$ is divisible by the square of a prime, one must look for non-trivial permutations. 
Subsequent to \cite{CL24}, numerical techniques have shown that Question \ref{ques:generalpermutation} can be answered in the negative for $N=16$ and then one can show that no suitable permutations exist for all $N$ that are multiples of $16$.  This leads to a refinement of Question~\ref{ques:generalpermutation}.

\begin{ques}\label{ques:generalpermutation-refined}
Does there exist a permutation matrix $P$, so that all principal minors of  $\mc{F}_N \, P$ are non-zero, exactly if $N$ is not divisible by a fourth power of a prime?
%Does there exist a permutation matrix $P$, so that all principal minors of  $\mc{F}_N{\cdot} P$ are zero, exactly if $N$ is not divisible by a fourth power of a prime?
%For which $N\in\NN$ can one find a permutation (depending on $N$) of the columns of $\mc{F}_N$, such that all the principal minors of the resulting matrix are nonzero?
\end{ques}

These results, together with a more general analysis of the non-square-free case and of candidate permutations is the main topic of the manuscript in preparation \cite{CLMP25-2}.

%Each row and each column is indexed by an element of $\ZZ_N$. The ``canonical'' ordering is obtained by ordering the residues $\bmod \; N$ as $0,1,\dotsc,N-1$.
%The \emph{twisted} Fourier matrix, which is obtained by the permutation $\sig\in S_n$ of the rows, is denoted by $\mc{F}_{\sig, n}$.

%\begin{defn}
%A permutation $\sig\in S_N$ is called \emph{good}, if all the principal minors of $\mc{F}_{\sig, N}$ are nonzero. The set of good permutations is denoted by $G_N$.
%\end{defn}

%A restatement of Problem~\ref{prob:main} is to show that every $G_N$ is nonempty.
%%and we will provide ample evidence towards an affirmative solution to that problem. 
%When $N=p$ is a prime, every permutation $\sig\in S_p$ is good, due to Chebotar\"ev's theorem, i.e., $G_p=S_p$.

%When $n$ is composite, the situation gets complicated; some obvious nonzero minors are those that are obtained by rows or columns in an arithmetic progression, as they equal nonzero Vandermonde determinants up to a root of unity. There is some relevant work on the rank deficiency of Fourier submatrices (not necessarily square) \cite{DVB07,DVB08}; something relevant to Problem \ref{prob:main} for $N=p^k$, a power of a prime $p$, that if the rows (or columns) of a square submatrix are \emph{uniformly distributed} with respect to any divisor $d\mid N$, then its determinant is nonzero \cite{ACM12}.

%However, these minors are very scarce in the set of all minors of $\mc{F}_n$. 
%However, such minors are very rare among the minors of $\mc{F}_N$. 
In this paper, we will try to extend our knowledge on the minors of $\mc{F}_N$, focusing on the case where $N$ is square-free. Unavoidably, some algebraic and number-theoretic phenomena appear, so we will need the relevant background.

\section{Related literature}\label{sec:literature}

In the last 20 years, matrices with no zero minors have received significant attention in the field of sparse signal recovery and compressed sensing, see \cite{FR13} and references therein. Indeed, if a matrix $A\in \mathbb C^{m \times N}$ has no zero minors, then any $k$ entries of $Ax$ can be used to recover $x$ if $x$ has only $k$ non-zero entries at known locations or if $x$ has at most $k/2$ non-zero entries at unknown locations. 
%realm field of sparse signal recovery and compressed sensing. Indeed, if a matrix $A\in \mathbb C^{m \times N}$ has no zero minors, then any $k$ entries of $Ax$ can be used to recover $x$ if $x$ has only $k$ nonzero entries at known locations or if $x$ has at most $k/2$ components at unknown locations.  
Tao \cite{TaoUncertainty05} formulated this result specifically for Fourier matrices in the context of uncertainty principles.

Very recently, Loukaki focused on the case where $N$ is the product of two primes \cite{Lo24}. Building on the proof of Frenkel \cite{Fr03} for Chebotar\"ev's theorem 
%(with $N$ prime) 
and using a result of Zhang \cite{pChebotarev} on Fourier minors of prime order in fields of finite characteristic, she proved Conjecture \ref{conj:squarefree} for $N=pq$, where $p, q$ are primes with some restrictive conditions (namely that $p$ is primitive modulo $q$ and larger than the Zhang bound $\Gamma_q$).  

Emmrich and Kunis, in a very recent preprint \cite{EK25}, expand on the idea of Zhang in the case of $N=pq$ for distinct primes $p,q$ and achieve much better bounds as well as lifting the restriction of primitivity. This leads to another partial solution for Conjecture \ref{conj:squarefree} for the case $N=pq$. 

Our work is also based on a strengthening of Zhang's result and an adaptation of the proof of Chebotar\"ev's theorem provided by Tao in \cite{TaoUncertainty05} (see Section \ref{sec:pCheb} for details). We provide the first (to our knowledge) partial solution to Conjecture \ref{conj:squarefree} that goes beyond the scope of $N$ being a product of two distinct primes. We also share the insight of Loukaki that these results naturally extend to a more general class of minors of Fourier matrices (see Definition \ref{defn:minors} below).

\section{Background}

%In this section we will review some well known results about the Fourier matrix and its minors, as well as develop the definitions and basic propositions that will be useful for the main results.
In this section, we will review some well-known results about the Fourier matrix and its minors, and also develop some definitions and propositions that will be useful in proving our main results.

We begin this Section with a helpful lemma.

\begin{lemma}\label{lem:blockdeterminant}
Let $B$ be a square matrix, having the following block structure:
\[
B = 
\bra{
\begin{array}{c|c|c|c}
a_{11}B_{11} & a_{12}B_{12} & \ldots & a_{1n}B_{1n}\\ \hline
a_{21}B_{21} & a_{22}B_{22} & \ldots & a_{2n}B_{2n}\\ \hline
\vdots & \vdots & \ddots & \vdots\\ \hline
a_{n1}B_{n1} & a_{n2}B_{n2} & \ldots & a_{nn}B_{nn}
\end{array}
},
\]
where $B_{ij}$ is an $M_i\times M_j$ matrix and $a_{ij}\in\CC$. It also holds $M_1\geq M_2\geq\dotsb\geq M_n$, and $B_{ij}$ is the submatrix of $B_{i1}$, formed by the first $M_j$ columns of $B_{i1}$. Then,
\begin{equation}\label{eq:blockdeterminant}
\det B=\prod_{k=1}^n(\det A_k)^{M_k-M_{k+1}}\prod_{i=1}^n\det B_{ii},
\end{equation}
where $M_{n+1}=0$ and $A_k=\bra{a_{ij}}_{1\leq i,j\leq k}$.
\end{lemma}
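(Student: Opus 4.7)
The plan is to proceed by induction on $n$. The base case $n=1$ is immediate: $B = a_{11}B_{11}$ has $\det B = a_{11}^{M_1}\det B_{11}$, which matches the right-hand side of \eqref{eq:blockdeterminant} since $A_1 = (a_{11})$ and $M_2=0$. For the inductive step, assume first that $a_{11}\neq 0$; this restriction will be removed at the end.

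The key maneuver is column elimination driven by the first column block: for each $j\in\{2,\ldots,n\}$ and each $c\in\{1,\ldots,M_j\}$, subtract $(a_{1j}/a_{11})$ times column $c$ of column block $1$ from column $c$ of column block $j$. Because $B_{ij}$ consists of the first $M_j$ columns of $B_{i1}$, the $c$-th columns of $B_{ij}$ and $B_{i1}$ agree for every $i$ and every $c\leq M_j$. Consequently, the operation annihilates row block $1$ of column block $j$, while in row block $i\geq 2$ it produces $a'_{ij}$ times the $c$-th column of $B_{i1}$, where $a'_{ij}=(a_{11}a_{ij}-a_{1j}a_{i1})/a_{11}$; one recognizes $(a'_{ij})_{2\leq i,j\leq n}$ as the Schur complement of $a_{11}$ in $A_n$.

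The resulting matrix is block lower triangular: the $(1,1)$-corner is $a_{11}B_{11}$, contributing $a_{11}^{M_1}\det B_{11}$, and the $(n-1)$-fold bottom-right block matrix has row/column sizes $M_2\geq\cdots\geq M_n$, scalars $a'_{ij}$, and matrix blocks equal to the first $M_j$ columns of $B_{i1}$ for $i,j\geq 2$. The subtle verification is that this smaller matrix still satisfies the standing hypothesis of the lemma: for $i,j\geq 2$, its block in column position $j$ is the leftmost $M_j$ columns of its block in column position $2$. The inductive hypothesis then yields $\prod_{k=2}^n(\det A'_{k-1})^{M_k-M_{k+1}}\prod_{i=2}^n\det B_{ii}$ for the smaller determinant, where $A'_{k-1}=(a'_{ij})_{2\leq i,j\leq k}$. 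The Schur complement identity $\det A_k = a_{11}\det A'_{k-1}$ converts each factor, and since $\sum_{k=2}^n(M_k-M_{k+1})=M_2$, the compensating $a_{11}^{-M_2}$ combines with $a_{11}^{M_1}$ to yield $a_{11}^{M_1-M_2}=(\det A_1)^{M_1-M_2}$, completing the induction. The assumption $a_{11}\neq 0$ is then removed by viewing both sides of \eqref{eq:blockdeterminant} as polynomials in the $a_{ij}$ and the entries of the $B_{i1}$, and invoking density of the locus $\{a_{11}\neq 0\}$.
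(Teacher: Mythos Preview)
Your proof is correct and follows essentially the same route as the paper: induction on $n$, column elimination using the first block column to produce a block lower-triangular matrix, followed by the Schur complement identity $\det A_k = a_{11}\det A'_{k-1}$ and the telescoping of the powers of $a_{11}$. The only difference is that you explicitly handle the degenerate case $a_{11}=0$ via a polynomial-identity/density argument, which the paper's proof tacitly assumes away.
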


\begin{proof}
We prove this by induction on $n$. 
If $n=1$, then \eqref{eq:blockdeterminant} reduces to $\det(a_{11} B_{11})=a^{M_1}\det B_{11}$ for $a_{11} \in \CC$ and $B_{11} \in\CC^{M_1\times M_1}$, which is obviously true. 
Now, as the induction hypothesis, we assume that \eqref{eq:blockdeterminant} holds for $n = N-1$, in fact, for any $(N-1)\times (N-1)$ block arrangement with the prescribed properties, and then proceed to show that \eqref{eq:blockdeterminant} also holds for $n = N$. 
If $a_{11} = a_{12} = \ldots = a_{1N} = 0$, then $\det B = 0$ and $\det A_1 = a_{11} = 0$, so \eqref{eq:blockdeterminant} holds trivially. 
Otherwise, at least one of $a_{11}, a_{12}, \ldots, a_{1N}$ is non-zero, and we may assume without loss of generality that $a_{11} \neq 0$. 
%To show that \eqref{eq:blockdeterminant} holds also for $n = N$, 
Then, we perform the following column operations to $B$ which leave the determinant invariant: denoting by $B_j$ the $j$th column of blocks, i.e., 
%We proceed by induction on $n$; we assume that $n>1$ and that the conclusion holds for any $(n-1)\times (n-1)$ block arrangement with the prescribed properties. We perform the following column operations, which leave the determinant invariant: denoting by $B_j$ the $j$th column of blocks, i.e., 
%If $n=1$, \eqref{eq:blockdeterminant} reduces to $\det(aB)=a^M\det B$, where $B\in\CC^{M\times M}$. We proceed by induction; assume that $n>1$ and that the conclusion holds for any $(n-1)\times (n-1)$ block arrangement with the prescribed properties. We perform the following column operations, which leave the determinant invariant: denoting by $B_j$ the $j$th column of blocks, i.e., 
\[
B_j=\bra{
\begin{array}{c}
a_{1j}B_{1j}\\ \hline
a_{2j}B_{2j}\\ \hline
\vdots\\ \hline
a_{Nj}B_{Nj}
\end{array}
},\]
we replace $B_j$ by $B_j-\frac{a_{12}}{a_{11}}B_1(M_j)$ for every $j$, where $B_1(M_j)$ is the submatrix of $B_1$ formed by the first $M_j$ columns. In this way, we obtain
%\begin{align*}
%\det B &=\det\bra{
%\begin{array}{c|c|c|c}
%a_{11}B_{11} & 0 & \ldots & 0\\ \hline
%a_{21}B_{21} & \bra{a_{22}-\frac{a_{12}a_{21}}{a_{11}}}B_{22} & \ldots & \bra{a_{2n}-\frac{a_{12}a_{21}}{a_{11}}}B_{2n}\\ \hline
%\vdots & \vdots & \ddots & \vdots\\ \hline
%a_{n1}B_{n1} & \bra{a_{n2}-\frac{a_{12}a_{n1}}{a_{11}}}B_{n2} & \ldots & \bra{a_{nn}-\frac{a_{12}a_{n1}}{a_{11}}}B_{nn}
%\end{array}
%}\\
%&=a_{11}^{M_1}\det B_{11}\det\bra{
%\begin{array}{c|c|c}
%\bra{a_{22}-\frac{a_{12}a_{21}}{a_{11}}}B_{22} & \ldots & \bra{a_{2n}-\frac{a_{12}a_{21}}{a_{11}}}B_{2n}\\ \hline
%\vdots & \ddots & \vdots\\ \hline
%\bra{a_{n2}-\frac{a_{12}a_{n1}}{a_{11}}}B_{n2} & \ldots & \bra{a_{nn}-\frac{a_{12}a_{n1}}{a_{11}}}B_{nn}
%\end{array}
%}.
%\end{align*}
%Using the induction hypothesis, the latter equals
%\begin{align*}
%a_{11}^{M_1}\det B_{11}\prod_{k=2}^n(\det A'_k)^{M_k-M_{k+1}}\prod_{i=2}^n\det B_{ii} &= a_{11}^{M_1-M_2}\prod_{k=2}^n\det (a_{11}A'_k)^{M_k-M_{k+1}}\prod_{i=2}^n\det B_{ii}\\
%&=\prod_{k=1}^n(\det A_k)^{M_k-M_{k+1}}\prod_{i=1}^n\det B_{ii},
%\end{align*}
%as desired.
%\red{/// DGL:
\begin{align*}
\det B &=\det\bra{
\begin{array}{c|c|c|c}
a_{11}B_{11} & 0 & \ldots & 0\\ \hline
a_{21}B_{21} & \bra{a_{22}-\frac{a_{12}a_{21}}{a_{11}}}B_{22} & \ldots & \bra{a_{2n}-\frac{a_{1N}a_{21}}{a_{11}}}B_{2N}\\ \hline
\vdots & \vdots & \ddots & \vdots\\ \hline
a_{N1}B_{N1} & \bra{a_{N2}-\frac{a_{12}a_{N1}}{a_{11}}}B_{N2} & \ldots & \bra{a_{NN}-\frac{a_{1N}a_{N1}}{a_{11}}}B_{NN}
\end{array}
}\\
&=a_{11}^{M_1} \det B_{11}\det\bra{
\begin{array}{c|c|c}
\bra{a_{22}-\frac{a_{12}a_{21}}{a_{11}}}B_{22} & \ldots & \bra{a_{2N}-\frac{a_{1N}a_{21}}{a_{11}}}B_{2N}\\ \hline
\vdots & \ddots & \vdots\\ \hline
\bra{a_{N2}-\frac{a_{12}a_{N1}}{a_{11}}}B_{N2} & \ldots & \bra{a_{NN}-\frac{a_{1N}a_{N1}}{a_{11}}}B_{NN}
\end{array}
}.
\end{align*}
%Using the induction hypothesis, the latter equals
By the induction hypothesis, we have 
\begin{align*}
\det B 
&= a_{11}^{M_1}\det B_{11} \, \prod_{k=2}^N(\det A'_k)^{M_k-M_{k+1}}\prod_{i=2}^N\det B_{ii} \\
&= a_{11}^{M_1-M_2} \, \prod_{k=2}^N \big( a_{11} \det A'_k \big)^{M_k-M_{k+1}} \prod_{i=1}^N\det B_{ii}\\
&= (\det A_1)^{M_1-M_2} \, \prod_{k=2}^N(\det A_k)^{M_k-M_{k+1}}\prod_{i=1}^N\det B_{ii}\\
&=\prod_{k=1}^N(\det A_k)^{M_k-M_{k+1}}\prod_{i=1}^N\det B_{ii},
\end{align*}
where $A'_k$ is the $(k-1) \times (k-1)$ matrix given by
\[
A'_k
=
\bra{
\begin{array}{ccc}
a_{22}-\frac{a_{12}a_{21}}{a_{11}} & \ldots & a_{2k}-\frac{a_{1k}a_{21}}{a_{11}}\\ 
\vdots & \ddots & \vdots\\ 
a_{k2}-\frac{a_{12}a_{k1}}{a_{11}} & \ldots & a_{kk}-\frac{a_{1k}a_{k1}}{a_{11}}
\end{array}
}.
\]
This completes the proof. 
%}
\end{proof}

\subsection{Fourier minors}

Chebotar\"ev proved Theorem~\ref{thm:cheb} in 1926. Since then, many alternative proves were given over the last century. It is certainly not true for $\mc{F}_N$ with $N\in\NN$ composite as discussed above, 
however, we will try to present a family, as encompassing as possible, of non-zero minors
of $\mc{F}_N$, under the number theoretic assumption that $N$ is square-free.

\begin{defn}\label{defn:minors}
%Let $A\in\CC^{N\times N}$ with $N\in\NN$, and consider sets $K,L\subset\ZZ_N$ of equal cardinality $r=\abs{K}=\abs{L}$. 
Let $A\in\CC^{N\times N}$ with $N\in\NN$, and let $K,L\subset\ZZ_N$ be sets of equal cardinality, i.e., $\abs{K}=\abs{L}$. 
We denote by $B=A[K, L]\in\CC^{M\times M}$ the square submatrix of $A$ formed with rows and columns indexed by $K$ and $L$, respectively.
%whose rows and columns are indexed by $K$ and $L$, respectively.
For a divisor $d$ of $N$ and $0\leq i\leq d-1$, let 
\[K_{d,i}=\set{k\in K: k\equiv i\bmod d}, \;\;\; L_{d,i}=\set{\ell\in L: \ell\equiv i\bmod d}.\]
The matrix $B$ is 
\begin{itemize}
\item a \emph{principal} submatrix of $A$ if $K = L$;
\item a \emph{$d$-principal} submatrix of $A$ if $\abs{K_{d,i}}=\abs{L_{d,i}}$ for all $0\leq i\leq d-1$;
\item a \emph{$d$-Galois principal} submatrix of $A$ if there exists a Galois permutation, i.e., 
a permutation $\sig\in S_d$ of the form $\sigma(i)=si\bmod d$, $i\in\ZZ_d$, where $s\in\ZZ_d^*$ is coprime with $d$, such that $\abs{K_{d,i}}=\abs{L_{d,\sig(i)}}$ for all $0\leq i\leq d-1$.
%\item a \emph{$d$-Galois principal} submatrix of $A$ if there exists an $s\in\ZZ_d^*$ with $\gcd(s,N)=1$ such that for the permutation $\sig\in S_d$ given by $\sig(i)\equiv si\bmod d$, it holds $\abs{K_{d,i}}=\abs{L_{d,\sig(i)}}$ for all $0\leq i\leq d-1$. 
\end{itemize}
\end{defn}

%It should be noted that 
It is clear that every principal submatrix is $d$-principal for any divisor $d$ of $N$, and every $d$-principal submatrix is $d$-Galois principal. 

%We now present two technical results which are needed for our investigation, one concerning the relationship between the invertibility of complementary submatrices and one concerned with Kronecker products. Both of these results are likely part of folklore, at least partially, but we include their proofs for the sake of completeness. 
We now present two technical results needed for our investigation: one concerning the relationship between the invertibility of complementary submatrices, and the other involving Kronecker products. Both of these results are likely part of folklore, at least partially, but we include their proofs for the sake of completeness.

\begin{prop}\label{prop:unitarycomplementarity}
Let $A\in \CC^{N\times N}$ be a unitary matrix. If $K,L \subset\ZZ_N$ are sets of equal cardinality, then $A[K,L]$ is invertible if and only if $A[K^C,L^C]$ is invertible. 
%Let $A\in \CC^{N\times N}$ be a unitary matrix and let $K,L \subset\ZZ_N$ be row and, respectively, column index subsets of equal cardinality. Then $A[K,L]$ is invertible if and only if $A[K^C,L^C]$ is invertible. 
\end{prop}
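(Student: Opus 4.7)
The plan is to deduce this from the classical Jacobi identity for complementary minors of an invertible matrix, combined with the fact that $A^{-1} = A^*$ for a unitary matrix. Recall that for any invertible $A \in \CC^{N\times N}$ and any index sets $K, L \subset \ZZ_N$ of equal size, the Jacobi complementary minor formula gives
\[
\det A[K,L] \;=\; \pm\, (\det A) \cdot \det (A^{-1})[L^C, K^C].
\]
I would first state (and, for completeness, sketch) this identity: it follows from expanding the determinant of $A$ after applying the Laplace expansion along the rows indexed by $K$, combined with the adjugate formula for $A^{-1}$. The sign is irrelevant for our purpose since we only care whether the minor vanishes.

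Next I would invoke unitarity. Since $A$ is unitary, $A^{-1} = A^*$, so the $(L^C, K^C)$-submatrix of $A^{-1}$ is the conjugate transpose of the $(K^C, L^C)$-submatrix of $A$:
\[
(A^{-1})[L^C, K^C] \;=\; \overline{A[K^C, L^C]}^{\,T}.
\]
Taking determinants and using $\det \overline{M}^T = \overline{\det M}$, we obtain
\[
\det (A^{-1})[L^C, K^C] \;=\; \overline{\det A[K^C, L^C]}.
\]
Combining this with the Jacobi identity and the fact that $\abs{\det A} = 1$ (as $A$ is unitary) yields
\[
\bigl|\det A[K,L]\bigr| \;=\; \bigl|\det A[K^C, L^C]\bigr|.
\]
From this equality of absolute values, one side vanishes if and only if the other does, which proves the equivalence of invertibility.

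The only step that requires care is the Jacobi complementary minor identity itself; I expect this to be the main (mildly technical) ingredient. One clean way to derive it is to write $A$ as a block matrix (after permuting rows and columns so that $K$ and $L$ indexes come first), then use the Schur complement and the cofactor expression of $A^{-1}$. Since the paper proves the proposition only for unitary $A$, an alternative slicker route is available: after permuting rows and columns, write $A$ in block form and directly compute $A A^* = I$ block-wise, obtaining a relation between $A[K,L]$, $A[K^C,L^C]$ and off-diagonal blocks that forces the two complementary diagonal blocks to have the same rank. Either approach works and gives the desired conclusion.
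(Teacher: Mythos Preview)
Your proposal is correct and follows essentially the same route as the paper: apply the Jacobi complementary-minor identity, use unitarity to replace $A^{-1}$ by $\bar{A}^T$, identify $(A^{-1})[L^C,K^C]$ with $\overline{A[K^C,L^C]}$, and conclude that the two minors have the same absolute value. The paper states the Jacobi identity without proof, so your additional remarks on how to derive it are a bonus rather than a deviation.
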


\begin{proof}
We make use of a version of the Jacobi identity for an arbitrary invertible square matrix $A$ (see e.g., (0.8.4.1) in \cite{HJ13}) which states that 
\[\det A[K,L] = (-1)^{\sum K +\sum L} \cdot \det A \cdot \det \big((A^{-1})[L^C, K^C]\big).\]

Since we additionally have that $A$ is unitary, $A^{-1}=\bar{A}^T$. This implies that 
\[(A^{-1})[L^C,K^C]=\left(\bar{A}^T\right)[L^C,K^C]=\left( \bar{A}\right)[K^C,L^C]=\overline{A[K^C,L^C]}.\]

Then, from the Jacobi identity, we have that 
\[\begin{split}
\det A[K,L] &= (-1)^{\sum K+\sum L}\cdot\det A\cdot \det \big((A^{-1})[L^C,K^C]\big) \\
& = (-1)^{\sum K+\sum L}\cdot\det A\cdot \det \Big(\overline{ A[K^C,L^C]}\Big) \\
& = (-1)^{\sum K+\sum L}\cdot\det A\cdot \overline{ \det \big(A[K^C,L^C]\big)} .
\end{split}\]
%and the conclusion follows immediately from the assumption that $A$ is unitary and hence invertible.
Note that $\det A \neq 0$ since $A$ is unitary, and therefore, we have $\det A[K,L] \neq 0$ if and only if $\det \big(A[K^C,L^C]\big ) \neq 0$, as desired. 
\end{proof}

\begin{rem}
\,

    \begin{itemize}
        \item The proof of Proposition \ref{prop:unitarycomplementarity} was suggested to us by Loukaki and can also be found in \cite{Lo24}. 
        \item Proposition \ref{prop:unitarycomplementarity} remains valid if the elements of $A$ belong in an algebraic number field (for example, a cyclotomic field), reduced modulo a prime ideal over a (rational) prime $q$. More precisely, if the reduced value of $\det A$ is non-zero in characteristic $q$ (for more detail, see Subsection \ref{subsec:basic-facts-cyclotomic-fields}), then for any sets $K,L \subset\ZZ_N$ of equal cardinality, the matrix $A[K,L]$ is invertible if and only if $A[K^C,L^C]$ is invertible. 
        In particular, this holds for $A = \mc F_p$ over the field $\ZZ_q$ with $p$ and $q$ different primes, since $\abs{\det\mc F_p}^2=p^p$ is not divisible by $q$.  
        \item The implication of Proposition \ref{prop:unitarycomplementarity} most relevant to our purposes is that, to prove that $\mc F_N$ has no zero principal (or, respectively, no zero $d$-principal or no zero $d$-Galois principal) minors, it suffices to check principal (or, respectively, $d$-principal or $d$-Galois principal) minors of size $r\leq\lfloor\tfrac{N}{2}\rfloor$. 
    \end{itemize}
\end{rem}    
%    \begin{itemize}
%        \item The proof of Proposition \ref{prop:unitarycomplementarity} was suggested to us by Loukaki and can also be found in \cite{Lo24}. 
%        \item The conclusion of Proposition \ref{prop:unitarycomplementarity} still holds if the elements of $A$ belong in an algebraic number field (for example, a cyclotomic field), reduced modulo a prime ideal over a (rational) prime $q$, if we assume that the reduced value of $\det A$ is nonzero in characteristic $q$ (for more detail, see Subsection \ref{subsec:basic-facts-cyclotomic-fields}). In particular, this holds for $\mc F_p$ over the field $\ZZ_q$ with $p$ and $q$ different primes since $\abs{\det\mc F_p}^2=p^p$ is not divisible by $q$ (in particular because after normalizing the Fourier matrix by $\tfrac{1}{\sqrt{p}}$, it is unitary).
%        \item The implication of Proposition \ref{prop:unitarycomplementarity} that is most relevant for us is that in order to prove that $\mc F_N$ has no zero principal (or, respectively, no zero $d$-principal or no zero $d$-Galois principal) minors, it suffices to check principal (or, respectively, $d$-principal or $d$-Galois principal) minors of size $r\leq\lfloor\tfrac{N}{2}\rfloor$.
%    \end{itemize}
%\end{rem}

We now turn to the second technical result which essentially says that if $N=nm$ is a composite number with $\gcd(n,m)=1$, then invertibility of submatrices of $\mc F_{mn}$ is equivalent to invertibility of the corresponding submatrices of $\mc F_n\otimes\mc F_m$.
%The second result essentially says that when $N=nm$ is a composite number with $\gcd(n,m)=1$, investigating the invertibility of submatrices of $\mc F_{mn}$ is the same as investigating the invertibility of submatrices of $\mc F_n\otimes\mc F_m$.

Firstly, for $\omega=e^{2\pi i/N}$ (a principal root of unity of order $N$) and arbitrary permutations $\rho,\tau\in S_N$, we will denote by \[\mc F_N^{\rho,\tau}:=\big(\omega^{\rho(k)\cdot\tau(\ell)}\big)_{0\leq k,\ell\leq N-1}\] the matrix obtained by permuting the rows and columns of $\mc F_N$ by $\rho$ and $\tau$, respectively. If we only permute the rows, we will simply write $\mc F_N^{\rho} := \mc F_N^{\rho,\text{id}}$.
%$\mc F_N^{\rho}$ for $\mc F_N^{\rho,\text{id}}$.

When $\tau=\rho$, the principal (respectively $d$-principal, $d$-Galois principal) submatrices of $\mc F_N^{\rho,\rho}$ are just the reordered principal (respectively $d$-principal, $d$-Galois principal) submatrices of $\mc F_N$. 
%It is easy to see that if $\tau=\rho$, then the principal (respectively $d$-principal, $d$-Galois principal) submatrices of $\mc F_N^{\rho,\rho}$ are just the reordered principal (respectively $d$-principal, $d$-Galois principal) submatrices of $\mc F_N$. 

%Next, for $K,L\subset\ZZ_N$ index subsets of equal cardinality, 
Next, for sets $K,L\subset\ZZ_N$ of equal cardinality, we can view $\det\mc F_N[K,L]$ as an integer coefficient polynomial evaluated at $\omega$. Then, we have $\det \mc F_N[K,L] = 0$ if and only if $\det\mc F_N[K,L]$ is divisible by the minimal polynomial of $\omega$, namely the $N$-th cyclotomic polynomial. In fact, since the roots of the $N$-th cyclotomic polynomial are precisely $\omega^s$ with $\gcd(s,N)=1$ (see Section~\ref{subsec:basic-facts-cyclotomic-fields} for more details), we have 
\[
\det \mc F_N[K,L] = 0 \quad \Leftrightarrow \quad \det \mc F_N^{\sigma,\text{id}} [K,L] = 0 \quad \Leftrightarrow\quad \det \mc F_N^{\text{id},\sigma}[K,L] = 0
\] 
whenever $\sigma \in S_N$ is a Galois permutation, i.e., a permutation of the form $\sigma(j)=sj\bmod N$, $j\in\ZZ_N$, where $s$ is coprime with $N$.
%Next, for $K,L\subset\ZZ_N$ index subsets of equal cardinality, we can view $\det\mc F_N[K,L]$ as an integer coefficient polynomial evaluated at $\omega$. Making use of the fact that the roots of the $N$-th cyclotomic polynomial are precisely $\omega^s$ for $\gcd(s,N)=1$ (see the next section for more details), it follows that 
%\[\det \mc F_N[K,L] = 0 \quad \Leftrightarrow \quad \det \mc F_N^{\sigma}[K,L] = 0 \quad \Leftrightarrow\quad \det \mc F_N^{\text{id},\sigma}[K,L] = 0\] for any Galois permutation $\sigma$, i.e., a permutation that satisfies the fact that for some $s$ with $\gcd(s,N)=1$ we have $\sigma(j)=sj\bmod N$ for all $j\in\ZZ_N$.

Now assume that $N=mn$ and that $\gcd(n,m)=1$. By the Chinese remainder theorem, there is a group isomorphism $\psi : \ZZ_m\times\ZZ_n \rightarrow \ZZ_{mn}$ that maps $(a,b) \in \ZZ_m\times\ZZ_n$ to the unique $j\in\ZZ_{mn}$ satisfying $j\equiv a\bmod m$ and $j\equiv b\bmod n$. 
%Now assume that $N=mn$ and that $\gcd(n,m)=1$. By the Chinese remainder theorem, there is a group isomorphism $\psi$ between $\ZZ_m\times\ZZ_n$ and $\ZZ_{mn}$ that sends $(a\bmod m,b\bmod n)$ to the unique $j\in\ZZ_{mn}$ satisfying $j\equiv a\bmod m$ and $j\equiv b\bmod n$. This allows us to define the permutation 
%\[
%\begin{split}
%\rho:=&\begin{pmatrix} 0 & 1 & \dots & n-1 & n & \dots & 2n-1 & \dots & mn-1 \\ \psi(0,0) & \psi(0,1) & \dots & \psi(0,n-1) & \psi(1,0) & \dots & \psi(1,n-1) & \dots & \psi(m-1,n-1)\end{pmatrix} \\
%=& \left( \begin{array}{c} n k_1 + k_2 \\
%\psi(k_1,k_2)
%\end{array}
%: \; k_1 \in \ZZ_m , \;\;  k_2 \in \ZZ_n 
%: \; 0 \leq k_1 \leq m-1, \;\;  0 \leq k_2 \leq n-1
%\right) . 
%\end{split}
%\]
We define the permutation 
$\rho: \ZZ_{mn} \rightarrow \ZZ_{mn}$ by $\rho (n k_1 + k_2) = \psi(k_1,k_2)$ for $k_1 \in \ZZ_m$ and $k_2 \in \ZZ_n$, i.e.,
\[
\rho:=\begin{pmatrix} 0 & 1 & \dots & n-1 & n & \dots & 2n-1 & \dots & mn-1 \\ \psi(0,0) & \psi(0,1) & \dots & \psi(0,n-1) & \psi(1,0) & \dots & \psi(1,n-1) & \dots & \psi(m-1,n-1)\end{pmatrix} . 
\]
%Note that $\gcd(m+n,mn)=1$ since $\gcd(m,n)=1$, so setting $\tau:\ZZ_{mn}\to\ZZ_{mn}$ via $\tau(j)\equiv (m+n)j\bmod mn$ and putting the above observations together, we get that all the principal (or, resepctively $d$-principal or $d$-Galois principal) submatrices of $\mc F_{mn}$ are all non-singular if and only if all the the principal (or, resepctively $d$-principal or $d$-Galois principal) submatrices of $\mc F_{mn}^{\tau\circ\rho,\rho}$ are all non-singular. 
Note that since $\gcd(m+n,mn)=1$, the map $\tau:\ZZ_{mn}\to\ZZ_{mn}$ defined by $\tau(j)\equiv (m+n)j\bmod mn$ is a Galois permutation. 
Putting the above observations together, we deduce that all principal (or, respectively, $d$-principal or $d$-Galois principal) submatrices of $\mc F_{mn}$ are non-singular if and only if all principal (or, respectively, $d$-principal or $d$-Galois principal) submatrices of $\mc F_{mn}^{\tau\circ\rho,\rho}$ are non-singular. 
%Putting the above observations together, we get that all the principal (or, respectively $d$-principal or $d$-Galois principal) submatrices of $\mc F_{mn}$ are all non-singular if and only if all the the principal (or, respectively $d$-principal or $d$-Galois principal) submatrices of $\mc F_{mn}^{\tau\circ\rho,\rho}$ are all non-singular. 

Now, let $(k_1,k_2),(\ell_1,\ell_2)\in \ZZ_m\times\ZZ_n$ be arbitrary. Set $\eta=\omega^m$ and $\zeta=\omega^n$, so that $\eta^n=\zeta^m=1$. 
%Now let $(k_1,k_2),(\ell_1,\ell_2)\in \ZZ_m\times\ZZ_n$ be arbitrary and set $k=\psi(k_1,k_2)$ and $\ell=\psi(\ell_1,\ell_2)$. Set $\eta=\omega^m$ and $\zeta=\omega^n$, so that $\eta^n=\zeta^m=\omega^{mn}=1$. 
%Since $\gcd(m,n)=1$, by B\'ezout's lemma there are integers $a,b$ such that $am+bn=1$. This implies that \[\omega^{m+n}=\omega^{(m+n)(am+bn)}=\big(\eta^a\zeta^b\big)^{m+n}=\eta^{am}\zeta^{bn}.\] 
Then the $(n k_1 + k_2, n \ell_1 + \ell_2)$ entry in $\mc F_{mn}^{\tau\circ\rho,\rho}$ is 
\[
\begin{split}
\omega^{(\tau\circ\rho)(n k_1 + k_2)\cdot\rho(n \ell_1 + \ell_2)}
&=\big(\omega^{m+n})^{\rho(n k_1 + k_2)\rho(n \ell_1 + \ell_2)}=(\eta\zeta)^{\psi(k_1,k_2) \psi(\ell_1,\ell_2)} \\
&= \eta^{\psi(k_1,k_2) \psi(\ell_1,\ell_2)} \zeta^{\psi(k_1,k_2) \psi(\ell_1,\ell_2)}
=\eta^{k_2\ell_2}\zeta^{k_1\ell_1} , 
\end{split}
\]
which shows that 
%This means that 
\begin{equation}\label{eq:kronprod}
\mc F_{mn}^{\tau\circ\rho,\rho} = 
\bra{
\begin{array}{c|c|c|c}
\mc{F}_n & \mc{F}_n & \ldots & \mc{F}_n\\ \hline
\mc{F}_n & \ze\mc{F}_n & \ldots & \ze^{m-1}\mc{F}_n\\ \hline
\vdots & \vdots & \ddots & \vdots\\ \hline
\mc{F}_n & \ze^{m-1}\mc{F}_n & \ldots & \ze^{(m-1)^2}\mc{F}_n
\end{array}
} .
\end{equation}
%and we therefore have 
Therefore, we have 
\begin{lemma}\label{lem:principalpermutation}
    The principal minors of $\mc{F}_m\otimes\mc{F}_n$ are Galois conjugates  of the principal minors of
    $\mc{F}_{mn}$, up to a sign. In particular, $\mc{F}_{mn}$ has a zero principal minor if and only if
    $\mc{F}_m\otimes\mc{F}_n$ has a zero principal minor.
\end{lemma}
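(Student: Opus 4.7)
The plan is to read off the lemma directly from the identification \eqref{eq:kronprod} together with the Galois-action observation made just before it; the preceding discussion has already done the heavy lifting, so only a short translation of indices remains.

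First I would fix an arbitrary $K'\subset\{0,\ldots,mn-1\}$ and consider the principal submatrix $(\mc{F}_m\otimes\mc{F}_n)[K',K']=\mc{F}_{mn}^{\tau\circ\rho,\rho}[K',K']$, whose $(k,\ell)$-entry is $\omega^{(m+n)\rho(k)\rho(\ell)}$. Setting $K:=\rho(K')\subset\ZZ_{mn}$, I observe that this matrix is obtained from $\mc{F}_{mn}^{\sigma}[K,K]$ (with $\sigma\colon j\mapsto(m+n)j\bmod mn$) by applying one and the same reordering $\rho|_{K'}$ to both rows and columns. Since simultaneous row-column permutation preserves determinants, I obtain $\det(\mc{F}_m\otimes\mc{F}_n)[K',K']=\det\mc{F}_{mn}^{\sigma}[K,K]$.

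Next I would note that $\mc{F}_{mn}^{\sigma}[K,K]$ has $(k,\ell)$-entry $(\omega^{m+n})^{k\ell}$, so its determinant is the image of $\det\mc{F}_{mn}[K,K]\in\ZZ[\omega]$ under the Galois automorphism $\omega\mapsto\omega^{m+n}$ of $\QQ(\omega)/\QQ$, which is well defined because $\gcd(m+n,mn)=1$ (as noted in the text preceding \eqref{eq:kronprod}). Since $K'\mapsto\rho(K')=K$ is a bijection between subsets of $\{0,\ldots,mn-1\}$ and subsets of $\ZZ_{mn}$, this establishes the first assertion (the qualifier ``up to a sign'' stated in the lemma is in fact trivial for principal minors, but harmless). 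For the ``in particular'' part I would invoke that an element of $\QQ(\omega)$ vanishes if and only if any of its Galois conjugates does, which gives the claimed equivalence.

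No serious obstacle is expected; the only point requiring care is verifying that the row and column reorderings induced by $\rho|_{K'}$ cancel \emph{exactly} when restricting to principal submatrices, which is what lets $\mc{F}_{mn}^{\tau\circ\rho,\rho}$ be replaced by $\mc{F}_{mn}^{\sigma}$ without any sign correction and yields a clean ``if and only if'' in the final statement.
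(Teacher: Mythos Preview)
Your proposal is correct and follows exactly the approach the paper takes: the paper derives the lemma as an immediate consequence of the discussion leading up to \eqref{eq:kronprod} (indicated by ``and we therefore have''), and you are simply making that derivation explicit by unpacking the identification $\mc{F}_m\otimes\mc{F}_n=\mc{F}_{mn}^{\tau\circ\rho,\rho}$, stripping off the common row/column reordering $\rho$, and recognizing the remaining Galois twist $\omega\mapsto\omega^{m+n}$. Your remark that the ``up to a sign'' is in fact redundant for principal minors is also correct.
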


%The same holds if we replace principal minors by $d$-principal or, respectively, $d$-Galois principal minors.
The statement of Lemma~\ref{lem:principalpermutation} remains valid if `principal' is replaced with `$d$-principal' or `$d$-Galois principal', respectively. 

\begin{lemma}\label{lem:mnprincipalpermutation}
    Let $d$ be a divisor of $mn$.
    The $d$-principal (respectively $d$-Galois principal) minors of $\mc{F}_{mn}$ are Galois conjugates of the $d$-principal (respectively $d$-Galois principal)
    minors of $\mc{F}_m\otimes\mc{F}_n$. Moreover, all $d$-principal (respectively $d$-Galois principal) minors of $\mc{F}_{mn}$ are non-zero
    if and only if all $d$-principal (respectively $d$-Galois principal) minors of $\mc{F}_m\otimes\mc{F}_n$ are non-zero.
\end{lemma}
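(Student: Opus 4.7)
My plan is to follow the template of Lemma \ref{lem:principalpermutation}, but to replace the lexicographic identification implicit in \eqref{eq:kronprod} with the Chinese remainder theorem (CRT) identification of indices. With the CRT identification, the residue-mod-$d$ structure on $\ZZ_{mn}$ is preserved automatically, and the $d$-principal and $d$-Galois principal correspondences fall out from a single Galois automorphism, without having to track the permutation $\rho$ separately.

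Concretely, I would use the bijection $\psi\colon\ZZ_m\times\ZZ_n\to\ZZ_{mn}$ from the Chinese remainder theorem to index both the rows and the columns of $\mc F_m\otimes\mc F_n$ by $\ZZ_{mn}$. Writing $\eta=\omega^m$ (a primitive $n$-th root of unity) and $\zeta=\omega^n$ (a primitive $m$-th root of unity), and setting $s=m+n$ (so that $\gcd(s,mn)=1$), a direct computation gives
\[
\omega^{sjk}=\eta^{jk}\zeta^{jk}=\eta^{(j\bmod n)(k\bmod n)}\,\zeta^{(j\bmod m)(k\bmod m)},
\]
which is exactly the $(j,k)$-entry of $\mc F_m\otimes\mc F_n$ under CRT indexing. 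Hence $\mc F_m\otimes\mc F_n=\mc F_{mn}^{\tau}$ as matrices, where $\tau(j)\equiv sj\bmod mn$. Consequently, for any pair $K,L\ssq\ZZ_{mn}$ of equal cardinality,
\[
\det(\mc F_m\otimes\mc F_n)[K,L]=\det\mc F_{mn}^{\tau}[K,L]=\phi_s\bigl(\det\mc F_{mn}[K,L]\bigr),
\]
where $\phi_s\colon\omega\mapsto\omega^s$ is the Galois automorphism of $\QQ(\omega)$ induced by $s$. Since the $d$-principal (resp.\ $d$-Galois principal) condition depends only on the subsets $K,L\ssq\ZZ_{mn}$ and is thus identical on both sides, Galois conjugation pairs up $d$-principal (resp.\ $d$-Galois principal) minors of $\mc F_{mn}$ with those of $\mc F_m\otimes\mc F_n$; as $\phi_s$ preserves non-vanishing, the biconditional in the lemma follows in both cases.

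The main obstacle is conceptual: one must commit to the CRT identification of the index set of $\mc F_m\otimes\mc F_n$. The lexicographic identification underlying \eqref{eq:kronprod} scrambles residues modulo $d$ whenever $d$ has nontrivial factors on both the $m$ and $n$ sides, which would force a separate analysis of the effect of $\rho$ on residue classes. The CRT identification avoids this and reduces the whole proof to verifying the Kronecker product identity above plus an unwinding of the definitions.
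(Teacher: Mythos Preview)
Your argument is correct and rests on the same key identity as the paper---that $\mc F_m\otimes\mc F_n$ coincides with $\mc F_{mn}$ up to the Galois automorphism $\omega\mapsto\omega^{m+n}$---but your explicit commitment to CRT indexing is a genuine clarification of the paper's exposition. The paper displays $\mc F_m\otimes\mc F_n$ in lexicographic order (equation~\eqref{eq:kronprod}) and carries the reindexing permutation $\rho$ throughout, asserting that $\mc F_N^{\rho,\rho}$ has the ``same'' $d$-principal submatrices as $\mc F_N$; but $\rho$ need not respect residues modulo $d$ on the lexicographic side, so that assertion is at best imprecise for $d$-principality. By indexing via $\psi$ from the outset you obtain $\mc F_m\otimes\mc F_n=\mc F_{mn}^{\tau}$ with no $\rho$ present, the index sets $K,L$ are literally the same on both sides, and the $d$-principal and $d$-Galois principal conditions transfer for free. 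This is precisely the reading that the proof of Theorem~\ref{thm:inductionstep} relies on.
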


%\blue{Please double check these restatements}

%\input{Sections/Fourier}

%\input{Sections/Galois}

\subsection{Basic facts on cyclotomic fields}\bigskip\bigskip
\label{subsec:basic-facts-cyclotomic-fields}

The material included in this subsection related to cyclotomic fields may be found in any introductory textbook in algebraic number theory \cite{Marcus} or in the theory of cyclotomic fields \cite{washington}.

For any $n \in \NN$, there exists a unique irreducible monic polynomial $\Phi_n(X)\in\ZZ[X]$ which is a divisor of $x^n-1$ but not a divisor of $x^k-1$ for any $k < n$. This polynomial is called the \emph{$n$-th cyclotomic polynomial}, and is of degree $\vphi(n)$ which is the number of positive integers less than $n$ and coprime to $n$ (the function $\vphi: \NN \rightarrow \NN$ is known as Euler's totient function). 
In fact, the roots of $\Phi_n(X)$ are precisely the $n$-th primitive roots of unity, i.e., $\om^k$ for $1 \leq k < n$ with $\gcd(k,n) = 1$, where $\om :=e^{-2\pi i/n}$. 
%In particular, $\om=e^{-2\pi i/n}$ is a root of $\Phi_n$. 

We consider the extended ring $\ZZ[\om]$, and the extended field $\QQ(\om)$ called the \emph{cyclotomic field of order $n$}. 
The $n$-th cyclotomic polynomial $\Phi_n(X)$ can be characterized as the monic polynomial with integer coefficients that is the minimal polynomial of $\om$ over $\QQ$. 
%The $n$-th cyclotomic polynomial $\Phi_n(X)$ can be characterized as the monic polynomial with integer coefficients that is the minimal polynomial over $\QQ(\om)$. 

%The extended ring is given by $\ZZ[\om]$, and the extended field $\QQ(\om)$ is called the cyclotomic field of order $n$. 
%The root $\om=e^{-2\pi i/n}$ is a primitive $n$th root of unity, and the number field $\QQ(\om)$ is the cyclotomic field of order $n$. The irreducible polynomial of $\om$ over $\QQ$ is the cyclotomic polynomial of order $n$, denoted by $\Phi_n(X)\in\ZZ[X]$. The degree of the extension $\QQ(\om)/\QQ$ is $\deg\Phi_n=\vphi(n)$, where $\vphi$ is Euler's totient function. 
%The ring of integers is $\ZZ[\om]$, and the extension $\QQ(\om)/\QQ$ is Galois, with Galois group
The extension $\QQ(\om)/\QQ$ is of degree $\vphi(n)$, and is Galois with Galois group
\[\Gal(\QQ(\om)/\QQ)\cong \ZZ_n^*, \]
where the element $g\in\ZZ_n^*$ corresponds to the isomorphism $\tau_g \in \Gal(\QQ(\om)/\QQ)$ satisfying $\tau_g(\om)=\om^g$.   
%The element $g\in\ZZ_n^*$ corresponds to the isomorphism $\tau_g$ with the property $\tau_g(\om)=\om^g$.  
The norm and trace of $\al\in\QQ(\om)$ are given by 
\[\Norm^{\QQ(\om)}_\QQ(\al)=\prod_{\sig\in\Gal(\QQ(\om)/\QQ)}\sig(\al), \qquad
\Tr^{\QQ(\om)}_\QQ(\al)=\sum_{\sig\in\Gal(\QQ(\om)/\QQ)}\sig(\al),\]
which will be simply written as $\Norm(\al)$ and $\Tr(\al)$, respectively, when the field extension is unambiguous. 
%In general, $\Norm(\al)$ and $\Tr(\al)$ are rational numbers, but if $\al$ is an algebraic integer, then $\Norm(\al), \Tr(\al)\in\ZZ$. 
These are generally rational numbers, but if $\al$ is an algebraic integer, then $\Norm(\al), \Tr(\al)\in\ZZ$, and in particular, 
\begin{equation}\label{eq:normrootofunity}
    \Norm(\om)=\begin{cases}
        \phantom{-}1, & n\neq2\\
        -1, & n=2.
    \end{cases}
\end{equation} 
%For Galois conjugate elements, the norm (and respectively, the trace) is the same, 
%i.e., if $\sig\in\Gal(\QQ(\om)/\QQ)$, then we have $\Norm(\al)=\Norm(\sig(\al))$ and $\Tr(\al)=\Tr(\sig(\al))$ for every $\al\in\QQ(\om)$. 
The norm and trace are invariant under Galois conjugation, that is, $\Norm(\sig(\al)) = \Norm(\al)$ and $\Tr(\sig(\al)) = \Tr(\al)$ for any $\sig\in\Gal(\QQ(\om)/\QQ)$ and $\al\in\QQ(\om)$.

The ring $\ZZ[\om]$ may not always be a unique factorization domain, but every ideal in $\ZZ[\om]$ splits uniquely into a product of prime ideals that are also maximal. For example, if $q$ is a (rational) prime, then the ideal generated by $q$ in $\ZZ[\om]$ splits as follows:
\[q\ZZ[\om]=(\mf{Q}_1\mf{Q}_2\dotsm\mf{Q}_r)^e,\]
where $\mf{Q}_1,\dotsc,\mf{Q}_r$ are distinct prime ideals in $\ZZ[\om]$, $e:=\vphi(q^a)$ is the \emph{ramification index}, and $q^a$ is the maximal power of $q$ dividing $n$, i.e., $n=q^a m$ with $q\nmid m$. In fact, we have $ref=\vphi(n)$, where $f:=\ord_m(q)$ is the multiplicative order of $q$ modulo $m$, meaning that $q^f$ is the smallest power of $q$ that is $1\bmod m$. 
For $\mf{Q}=\mf{Q}_j$ with any $1\leq j\leq r$, the residue field
$\ZZ[\om]/\mf{Q}$ is known to be a finite extension of $\FF_q$ of degree $f$, i.e., $\ZZ[\om]/\mf{Q}\cong\FF_{q^f}$. 
%The ring $\ZZ[\om]$ may not always be a unique factorization domain, but every ideal splits uniquely into a product of prime ideals, which are also maximal. For example, if $q$ is a (rational) prime, then the ideal generated by $q$ in $\ZZ[\om]$ splits as follows:
%\[q\ZZ[\om]=(\mf{Q}_1\mf{Q}_2\dotsm\mf{Q}_r)^e,\]
%where $\mf{Q}_1,\dotsc,\mf{Q}_r$ are distinct prime ideals of $\ZZ[\om]$, $e=\vphi(q^a)$ is the \emph{ramification index}, $q^a$ is the maximal power of $q$ dividing $n$. If $n=q^am$, then it holds $ref=\vphi(n)$, and $f=\ord_m(q)$, i.e., the multiplicative order of $q\bmod m$. If we denote by $\ka(\mf{Q})$ the residue field $\ZZ[\om]/\mf{Q}$, it is known that this is a finite extension of $\FF_q$ of degree $f$.

When considering the Fourier matrix $\mc{F}_n=(\om^{ij})_{0\leq i,j\leq n-1}$, we observe that all of its entries are $n$-th roots of unity. 
We will assume that $q$ is a prime with $q\nmid n$, which, in the above setting, corresponds to $n = q^0 m$, so that $m=n$, $a=0$, and $e=1$ (i.e., each prime ideal over $q$ is \emph{unramified} in $\QQ(\om)$), and therefore $rf=\vphi(n)$ with $f=\ord_n(q)$. 
Note that for any non-zero $\xi \in \FF_{q^f}$, we have $\xi^{q^f - 1} = 1$ in $\FF_{q^f}$, and since $q^f \equiv 1\bmod n$, i.e., $q^f - 1 = k n$ for some $k \in \ZZ$, we get $(\xi^k)^n = 1$ in $\FF_{q^f}$. 
Since $q^f$ is the smallest power of $q$ that is $1\bmod n$, we deduce that $\FF_{q^f}$ is the smallest field of characteristic $q$ containing $n$-th roots of unity. 
The Fourier matrix $\mc{F}_n$ in characteristic $q$ can be obtained by taking the entries of $\mc{F}_n$ modulo $\mf{Q}$, where $\mf{Q}=\mf{Q}_j$ with any $1\leq j\leq r$, so that the entries are elements of $\ZZ[\om]/\mf{Q}\cong\FF_{q^f}$. 
%When we consider the Fourier matrix $\mc{F}_n$, the entries are all $n$-th roots of unity. Suppose that $q$ is a prime with $q\nmid n$. Then $m=n$, $a=0$, so that $e=1$ (i.e., each prime ideal over $q$ is \emph{unramified} in $\QQ(\om)$), $f=\ord_n(q)$ and $rf=\vphi(n)$. $q^f$ is the smallest power of $q$ that is $1\bmod n$, hence, the smallest field of characteristic $q$ containing $n$-th roots of unity is $\FF_{q^f}$. $\mc{F}_n$ in characteristic $q$ may be obtained by taking the entries of $\mc{F}_n$ modulo $\mf{Q}$, where $\mf{Q}=\mf{Q}_j$, for some $j$, $1\leq j\leq r$. The entries of $\mc{F}_n$ modulo $\mf{Q}$ will be elements of $\ZZ[\om]/\mf{Q}\cong\FF_{q^f}$. 

Alternatively, one can directly define the Fourier matrix $\mc{F}_n$ in characteristic $q$ (which we denote by $\ol{\mc{F}}_n$) by fixing an $n$-th root of unity in $\FF_{q^f}$, say $\ze$, and then setting $\ol{\mc{F}}_n := (\ze^{ij})_{0\leq i,j\leq n-1}$. 

Both approaches yield the Fourier matrix $\mc{F}_n$ in characteristic $q$, but we will mostly use the latter construction.
%Another way to construct $\mc{F}_n$ in this finite setting is to fix an $n$-th root of unity in $\FF_{q^f}$, say $\ze$, and then construct the $n\times n$ matrix whose $(i,j)$-entry is $\ze^{ij}$; we denote this matrix by $\ol{\mc{F}}_n$.

\begin{notation}
For any $n\times n$ matrix $\mc F$ with rows and columns indexed by $\ZZ_n$, we denote by $\mc{F}[K,L]$ the submatrix of $\mc{F}$ whose rows and columns are indexed by $K, L\subset\ZZ_n$, respectively.
%For any $n\times n$ matrix $\mc F$, whose rows and columns are indexed by $\ZZ_n$, we denote by $\mc{F}[K,L]$ the submatrix of $\mc{F}$ whose rows and columns are indexed by $K, L\subset\ZZ_n$, respectively. 

Throughout, we will frequently use the minors of $\mc{F}_n$ and $\ol{\mc{F}}_n$.
For any sets $K, L\subset\ZZ_n$ with $\abs{K}=\abs{L}$, we define $D_{K,L}:=\det(\mc{F}_n[K,L]) = \det (\om^{ij})_{i \in K, \, j \in L} \in \CC$, and similarly, if $q$ is a prime with $q\nmid n$, we define $\ol{D}_{K,L}:=\det(\ol{\mc{F}}_n[K,L]) = \det (\ze^{ij})_{i \in K, \, j \in L} \in \FF_{q^f}$. 
%to be the determinant of the submatrix $\mc{F}_n [K,L] = (\om^{ij})_{i \in K, \, j \in L}$. 

%For the Fourier matrix $\mc{F}_n=(\om^{ij})_{0\leq i,j\leq n-1}$ and any sets $K, L\subset\ZZ_n$ with $\abs{K}=\abs{L}$, we define $D_{K,L}:=\det(\mc{F}_n[K,L]) = \det (\om^{ij})_{i \in K, \, j \in L}$. 
%to be the determinant of the submatrix $\mc{F}_n [K,L] = (\om^{ij})_{i \in K, \, j \in L}$. 
%Similarly, if $q$ is a prime with $q\nmid n$, we define $\ol{D}_{K,L}:=\det(\ol{\mc{F}}_n[K,L]) = \det (\ze^{ij})_{i \in K, \, j \in L}$. 

%For the Fourier matrix $\mc{F}_n=(\om^{ij})_{0\leq i,j\leq n-1}$, we define $D_{K,L}:=\det(\mc{F}_n[K,L])$. 
%Given a prime $q$ with $q\nmid n$ and an $n$-th root of unity $\ze$ in $\FF_{q^f}$, we define $\ol{D}_{K,L}:=\det(\ol{\mc{F}}_n[K,L])$. 

%More generally, 
We will also need the minors of $\mc{V}_n:=\bra{X^{ij}}_{0\leq i,j\leq n-1}$.
For any sets $K, L\subset\ZZ_n$ with $\abs{K}=\abs{L}$, we define the polynomial $P_{K,L}(X) := \det(\mc{V}_n[K,L]) = \det (X^{ij})_{i \in K, \, j \in L} \in\ZZ[X]$. 
For a polynomial $P(X) \in \ZZ[X]$, we define $\ol{P}(X) \in \FF_q[X]$ to be the polynomial obtained by taking the coefficients of $P(X)$ modulo $q$, so that all its coefficients belong to $\FF_q$.
%Given a prime $q$ with $q\nmid n$, we define $\ol{P}_{K,L}(X)\in\FF_q[X]$ to be the polynomial obtained by considering the coefficients of $P_{K,L}(X)$ in modulo $q$. 
\end{notation}

Noting that $D_{K,L} = P_{K,L}(\om)$, and that $\Phi_n(X)$ is the minimal polynomial of $\om$ over $\QQ$, we have 
%Noting that $D_{K,L} = P_{K,L}(\om)$, and that $\Phi_n(X)$ is the minimal polynomial over $\QQ(\om)$, we have 
%Therefore, we have $D_{A,B} = 0$ if and only if P_{A,B}(\om)=0$ if and only if $\Phi_n(X)\mid P_{A,B}(X)$. 
\[
D_{K,L} = 0 
\quad \Leftrightarrow \quad 
P_{K,L}(\om)=0
\quad \Leftrightarrow \quad 
\Phi_n(X)\mid P_{K,L}(X) . 
\]

%In the finite field setting, we also have $\ol{D}_{K,L} = 0$ if and only if $\ol{P}_{K,L}(\ze)=0$, 
In the finite field setting, we also have $\ol{D}_{K,L} = \ol{P}_{K,L}(\ze)$, so that 
\[
\ol{D}_{K,L} = 0 
\quad \Leftrightarrow \quad 
\ol{P}_{K,L}(\ze)=0
\]
but the condition $\ol{P}_{K,L}(\ze)=0$ does not necessarily imply $\ol{\Phi}_n(X)\mid \ol{P}_{K,L}(X)$, unless $f=\vphi(n)$. This is because the irreducible polynomial of $\ze$ over $\FF_q$ is
\[(X-\ze)(X-\ze^q)(X-\ze^{q^2})\dotsm(X-\ze^{q^{f-1}}),\]
which is a polynomial of degree $f$, while $\deg \ol{\Phi}_n(X) = \vphi(n) = rf$. 
%On the other hand, we also have $\ol{D}_{K, L}=0$ if and only if $D_{K,L}\in\mf{Q}$, 
On the other hand, we have 
\[
\ol{D}_{K,L} = 0 
\quad \Leftrightarrow \quad 
D_{K,L}\in\mf{Q} , 
\]
when $\ol{\mc{F}}_n$ is interpreted as the matrix obtained by reducing the entries of $\mc{F}_n$ modulo $\mf{Q}$.
%when constructing the Fourier matrix $\mc{F}_n$ in characteristic $q$ by taking the entries of $\mc{F}_n$ modulo $\mf{Q}$.

We would like to point out that these two approaches of interpreting zero minors in finite characteristic (arising from the two equivalent definitions of $\ol{\mc{F}}_n$) are essentially the same, and this is evident by the commutativity of the following diagram: 
%We would like to point out that the two methods of interpreting zero minors in finite characteristic (arising from two different methods of defining $\ol{\mc{F}}_n$) are essentially the same, and this is evident by the commutativity of the following diagram: 
%However, the two methods of interpreting zero minors in finite characteristic are essentially the same, and this is evident by the commutativity of the following diagram: 
\[ \begin{tikzcd}
\ZZ[X] \arrow{r}{\eta} \arrow[swap]{d}{\varrho} & \ZZ[\om] \arrow{d}{\ol\varrho} \\%
\FF_q[X] \arrow{r}{\ol\eta}& \FF_{q^f}
\end{tikzcd}
\]
where $\eta, \ol{\eta}$ are the evaluation maps $\eta(P)=P(\om)$ and $\ol{\eta}(\ol{P})=\ol{P}(\ze)$, $\varrho$ is simply $\varrho(P)=\ol{P}$ and finally, $\ol\varrho$ is the reduction $\bmod \mf{Q}$.

If $\ol{D}_{K, L}=0$, then $D_{K,L}\in\mf{Q}$, and the Galois group $\Gal(\QQ(\om)/\QQ)$ acts transitively on the prime ideals dividing $q$, so that $q\mid \Norm(D_{K, L})$. Conversely, if $q\mid\Norm(D_{K, L})$, then at least one of the Galois conjugates of $D_{K, L}$ belongs to $\mf{Q}$. Since Galois automorphisms are induced by $\om\mapsto\om^k$ with some $k\in \ZZ_n^*$, the Galois conjugates of $D_{K, L}$ can be expressed as $D_{K, \ell L}$ or even $D_{k K, \ell L}$, for some $k, \ell\in\ZZ_n^*$, which is again a minor of $\mc{F}_n$; thus, we have $D_{K', L'} \in\mf{Q}$ for some $K', L'\subset\ZZ_n$. herefore, if $q\mid\Norm(D_{K, L})$, then $\ol{D}_{K', L'}=0$ for some $K', L'\subset\ZZ_n$. 
Motivated by these observations, we introduce the following definition.

\begin{defn}
Let $M\in\NN$ and $q$ a prime such that $q\nmid M$. We say that $\mc{F}_M$ satisfies the \emph{$q$-Chebotarev property} if one of the following equivalent conditions hold:
%two equivalent conditions hold:
\begin{enumerate}
    \item All minors of $\mc{F}_M$ are non-zero in characteristic $q$.
    \item All minors of $\ol{\mc{F}}_M$ (with an $M$-th root of unity $\ze$ chosen from $\FF_{q^f}$) are non-zero.
%    \item All minors of $\ol{\mc{F}}_M$ are nonzero in characteristic $q$.    
%    \item $q\nmid \Norm(D_{A,B})$, for every $A,B\subset\ZZ_M$ with $\abs{A}=\abs{B}$.
    \item For every $K,L\subset\ZZ_M$ with $\abs{K}=\abs{L}$, the norm of $D_{K,L}$ is not divisible by $q$, i.e., $q\nmid \Norm(D_{K,L})$.
    \item For every $K,L\subset\ZZ_M$ with $\abs{K}=\abs{L}$, we have $\ol{D}_{K, L}\neq0$.
\end{enumerate}
\end{defn}

%If we apply a translation to the index set of rows or columns, the respective determinant changes by a power of $\om$ (up to a sign, depending how you order the translated set). Indeed, if $K'=K+k'$ for some $k'\in\ZZ_n$, then 
If we translate the index set of rows or columns, the respective determinant changes by a power of $\om$ (up to a $\pm$ sign, depending on the ordering of elements in the translated set). Indeed, if $K'=K+k'$ for some $k'\in\ZZ_n$, then 
\[D_{K',L}=\pm\det(\om^{(k+k')\ell})_{k\in K, \ell\in L}=\pm\om^{k'\sum_{\ell\in L}\ell}D_{K,L}.\]
Using \eqref{eq:normrootofunity}, we obtain:
%Using \eqref{eq:normrootofunity}, we show:
%When we apply a translation to the index set of rows or columns, the respective determinant changes by a power of $\om$. Indeed, suppose that $A'=A+a'$, for some $a'\in\ZZ_n$. Then,
%\[D_{A',B}=\det(\om^{(a+a')b})_{a\in A, b\in B}=\om^{a'\sum_{b\in B}b}D_{A,B}.\]
%Using \eqref{eq:normrootofunity}, we show:

\begin{prop}\label{prop:affinenorminvariance}
    Let $n \in \NN$ with $n \geq 3$, and let $\psi$ be an affine transformation on $\ZZ_n$, i.e., $\psi : \ZZ_n \rightarrow \ZZ_n$ given by $\psi(x)=ax+b$ for some $a\in\ZZ_n^*$ and $b\in\ZZ_n$. 
    Then for any $K, L\ssq\ZZ_n$ with $\abs{K} = \abs{L}$, we have 
    \[N(D_{K,L})=N(D_{\psi(K),L})=N(D_{K,\psi(L)}).\]
%    Suppose $n\neq 2$ and let $\psi$ be an \emph{affine} transformation of $\ZZ_n$, i.e. $\psi(x)=ax+b$, for $a\in\ZZ_n^*$, $b\in\ZZ_n$. Let also $A, B\ssq\ZZ_n$ be two subsets with the same cardinality. Then, the following equalities hold:
%    \[N(D_{A,B})=N(D_{\psi(A),B})=N(D_{A,\psi(B)}).\]
\end{prop}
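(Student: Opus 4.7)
The plan is to decompose the affine map $\psi(x) = ax + b$ into its multiplicative and additive parts, and to control each piece using the two algebraic facts available to us: Galois-conjugate elements have equal norm, and $\Norm(\om) = 1$ under the hypothesis $n \geq 3$ (this is exactly what \eqref{eq:normrootofunity} gives).

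First I would dispose of the translation. The computation displayed immediately before the proposition shows that
\[D_{A+b, B} \;=\; \om^{b \sum_{\be \in B} \be} \, D_{A, B}.\]
Taking norms and using multiplicativity, one gets $N(D_{A+b, B}) = \Norm(\om)^{b \sum_{\be \in B} \be} \, N(D_{A, B})$, and since $n \geq 3$ the norm of $\om$ equals $1$ by \eqref{eq:normrootofunity}, so $N(D_{A+b,B}) = N(D_{A,B})$. The symmetric computation with $B$ translated by $b$ instead (pulling the factor $\om^{b\al}$ out of each row) yields $N(D_{A,B+b}) = N(D_{A,B})$.

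Next I would dispose of the dilation by $a \in \ZZ_n^*$. Here the point is that the map $\tau_a \in \Gal(\QQ(\om)/\QQ)$ sending $\om \mapsto \om^a$ acts entrywise on the submatrix:
\[D_{aA, B} \;=\; \det\!\left( \om^{(a\al)\be} \right)_{\!\!\al \in A,\, \be \in B} \;=\; \det\!\left( \tau_a(\om^{\al\be}) \right)_{\!\!\al,\be} \;=\; \tau_a(D_{A, B}).\]
Since Galois conjugates have equal norm, $N(D_{aA, B}) = N(D_{A, B})$, and the analogous argument (or simply transposing) gives $N(D_{A, aB}) = N(D_{A, B})$.

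To finish, write $\psi(A) = aA + b$ and chain the two invariances: $N(D_{\psi(A), B}) = N(D_{aA + b, B}) = N(D_{aA, B}) = N(D_{A, B})$, and similarly for $\psi(B)$. There is no substantive obstacle in this proof; the only delicate point is the role of the hypothesis $n \geq 3$, which is exactly what is needed to make $\Norm(\om) = 1$ so that the $\om^{b \sum \be}$ scalar from translation is invisible under the norm. (For $n = 2$ the translation step would introduce a sign $(-1)^{b \sum \be}$ and the statement could fail.)
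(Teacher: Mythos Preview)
Your proof is correct and follows exactly the approach the paper indicates: the paper does not spell out a full proof but simply points to the translation identity $D_{A+a',B}=\om^{a'\sum_b b}D_{A,B}$ and to \eqref{eq:normrootofunity}, together with the earlier remark that Galois-conjugate elements share the same norm. You have filled in precisely those steps---translation handled via $\Norm(\om)=1$ for $n\geq 3$, dilation handled via $D_{aA,B}=\tau_a(D_{A,B})$ and Galois invariance of the norm---so there is nothing to add.
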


We conclude this Section, by listing some important values of cyclotomic polynomials:
\begin{equation}\label{eq:valuecyclotomic0}
\Phi_n(1)=\prod_{k\in\ZZ_n^*}(1-\om^k)=\Norm(1-\om)=\begin{cases}
    p, & \text{if $n$ is a power of the prime }p\\
    1, & \text{otherwise}.
\end{cases}
\end{equation}

Moreover, if $n$ is square-free and $p$ is a prime dividing $n$, then 
\begin{equation}\label{eq:valuecyclotomic}
    \Phi_n(\om^p)=pu,
\end{equation}
where $u$ is a unit in $\ZZ[\om^p]$ (see \cite{KurshanOdlyzko}).

%Moreover, if $n$ is square-free, then 
%\begin{equation}\label{eq:valuecyclotomic}
%    \Phi_n(\om^p)=pu,
%\end{equation}
%where $p$ is a prime dividing $n$, and $u$ is a unit in $\ZZ[\om^p]$ (see \cite{KurshanOdlyzko}).

%We conclude this Section, by listing some important values of cyclotomic polynomials:
%\begin{equation}\label{eq:valuecyclotomic0}
%\Phi_n(1)=\prod_{k\in\ZZ_n^*}(1-\om^k)=\Norm(1-\om)=\begin{cases}
%    p, & n\text{ is a power of the prime }p\\
%    1, & \text{else}.
%\end{cases}
%\end{equation}
%    
%Moreover, if $n$ is square free, we have \cite{KurshanOdlyzko}
%\begin{equation}\label{eq:valuecyclotomic}
%    \Phi_n(\om^p)=pu,
%\end{equation}
%where $u$ is a unit in $\ZZ[\om^p]$ and $p$ a prime dividing $n$.

\bigskip\bigskip

\section{Exploring further the \texorpdfstring{$q$}{}-Chebotar\"ev property}\label{sec:pCheb}\bigskip\bigskip

In this section, we will address the obstruction that arises in both proofs of the Conjecture \ref{conj:squarefree} for products of two primes that have appeared so far \cite{EK25,Lo24}, namely the absence of the $q$-Chebotar\"ev property. 
When $p$ is prime, all minors of $\mc{F}_p$ are non-zero by Chebotarev's Theorem, so the matrix $\mc{F}_p$ satisfies the $q$-Chebotarev property for all but finitely many primes $q\neq p$.
%We recall that Chebotarev's Theorem states that all minors of $\mc{F}_p$ are nonzero, therefore, $\mc{F}_p$ satisfies the $q$-Chebotarev property for all but finitely many primes $q\neq p$.

In \cite{pChebotarev}, an explicit lower bound for such $q$ was derived under the additional assumption that $q$ is a primitive element in $\bmod \; p$, meaning that the multiplicative order of $q$ modulo $p$ is equal to $\vphi(p) = p-1$.
We will extend this result regardless of $\ord_p(q)$; this will be accomplished by modifying the proof in \cite{pChebotarev}.
%In \cite{pChebotarev}, an explicit lower bound has been found for such $q$, however, under the restriction that $q$ is a primitive element $\bmod \; p$. We will extend this result regardless of $\ord_p(q)$; this will be accomplished by modifying the proof in \cite{pChebotarev}.

Note that $\mc{F}_p$ can be expressed in any field that contains the primitive $p$-th roots of unity.
Therefore, when considering $\mc{F}_p$ in characteristic $q$, we may take the smallest extension of $\FF_q$ that contains the $p$-th roots of unity, namely the field $\FF_{q^r}$ with $r=\ord_p(q)$, or any other field that contains $\FF_{q^r}$\footnote{This is equivalent to considering the elements of $\mc{F}_p$ modulo a prime ideal in $\QQ(e^{-2\pi i/p})$ that is above $q$.}.
%We recall that $\mc{F}_p$ may be considered in any field that accepts primitive $p$-th roots of unity, so when we consider $\mc{F}_p$ in characteristic $q$, we may take the smallest extension of $\FF_q$ that has $p$-th roots of unity, namely $\FF_{q^r}$, where $r=\ord_p(q)$, or any other field that contains it\footnote{This is equivalent to considering the elements of $\mc{F}_p$ modulo a prime ideal in $\QQ(e^{-2\pi i/p})$ that is above $q$.}.

The main tool in \cite{pChebotarev} is the following Lemma, which we provide here with a simpler proof.

\begin{lemma}\label{lem:localtoglobal}
Let $p$ and $q$ be distinct prime numbers. Let $f(X)\in\ZZ_{\geq0}[X]$ and $q>f(1)$. If $\bar{f}(X)$ is divisible by $\sum_{i=0}^{p-1}X^i$ in $\ZZ_q[X]$, then $f(X)$ is divisible by $\sum_{i=0}^{p-1}X^i$ in $\ZZ[X]$.
\end{lemma}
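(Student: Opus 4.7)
The plan is to perform Euclidean division of $f$ by $g(X) := \sum_{i=0}^{p-1} X^i$ over $\ZZ[X]$ (which is legitimate since $g$ is monic), writing $f = g h + r$ with $h, r \in \ZZ[X]$ and $\deg r < p-1$. The target conclusion $g \mid f$ in $\ZZ[X]$ is equivalent to $r = 0$. Reducing modulo $q$, the hypothesis $\bar{g} \mid \bar{f}$ forces $\bar{g} \mid \bar{r}$, and since $\deg \bar{r} < \deg \bar{g}$ this gives $\bar{r} = 0$. Thus every coefficient of $r$ is already a multiple of $q$, and the task is reduced to showing that each coefficient of $r$ has absolute value strictly less than $q$.

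To obtain this bound I would write $r$ explicitly, exploiting the identity $g(X)(X-1) = X^p - 1$, which yields $X^p \equiv 1 \pmod{g(X)}$. Writing $f(X) = \sum_i a_i X^i$ and folding by residue class modulo $p$, set
\[
b_j \;:=\; \sum_{i \,\equiv\, j \,(\mathrm{mod}\, p)} a_i , \qquad 0 \leq j \leq p-1 .
\]
Then $f(X) \equiv \sum_{j=0}^{p-1} b_j X^j \pmod{g(X)}$, and one further reduction via $X^{p-1} \equiv -(1 + X + \cdots + X^{p-2}) \pmod{g(X)}$ yields the closed form
\[
r(X) \;=\; \sum_{j=0}^{p-2} (b_j - b_{p-1}) \, X^j .
\]

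Here the assumption $f \in \ZZ_{\geq 0}[X]$ pays off directly: each $b_j$ is a nonnegative integer with $0 \leq b_j \leq \sum_i a_i = f(1) < q$, so $\abs{b_j - b_{p-1}} < q$ for every $j$. Combined with the earlier conclusion that $q \mid (b_j - b_{p-1})$, this forces $b_j = b_{p-1}$ for all $j$, hence $r \equiv 0$, completing the proof.

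I do not expect a real obstacle; the only subtle point is appreciating that the nonnegativity of the coefficients of $f$ is what allows the individual folded coefficients $b_j$ to be bounded by $f(1)$. Without nonnegativity, cancellations could make a single $b_j$ far larger in absolute value than $f(1)$, and the coefficient-by-coefficient bound on $r$ would collapse; this is precisely the role of the hypothesis $f \in \ZZ_{\geq 0}[X]$ in the statement.
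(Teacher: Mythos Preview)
Your proof is correct and follows essentially the same route as the paper: both arguments fold the coefficients of $f$ into the sums $b_j=\sum_{i\equiv j\,(p)}a_i$, use the nonnegativity hypothesis to bound each $b_j$ by $f(1)<q$, and conclude that the congruences $b_j\equiv b_{p-1}\pmod q$ force equality. The only cosmetic difference is that the paper reduces modulo $X^p-1$ and characterizes divisibility by $g$ as $r_0=\cdots=r_{p-1}$, whereas you divide by $g$ directly and read off the remainder $\sum_{j<p-1}(b_j-b_{p-1})X^j$; your packaging is arguably a touch cleaner.
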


\begin{proof}
Consider the Euclidean division of $f(X)$ by $X^p-1$ in $\ZZ[X]$:
\[
f(X) = g(X) (X^p-1) + r(X) , 
\]
where $g(X), r(X) \in \ZZ[X]$ and $\deg r(X) \leq p-1$.
%where the remainder $r(X)$ is a polynomial of degree less than $p$. 
Note that $f(X)\in\ZZ_{\geq0}[X]$ implies $r(X)\in\ZZ_{\geq0}[X]$. 
Indeed, we have $X^n\equiv X^s\bmod(X^p-1)$ whenever $n=kp+s$ with $k \in \ZZ$ and $s \in \{ 0, \ldots, p-1 \}$, so if
\[f(X)=\sum_{i=0}^{\deg f}f_iX^i, \;\;\; r(X)=\sum_{i=0}^{p-1}r_iX^i,\]
then  
\[r_i=\sum_{j\equiv i\bmod p}f_j  \; \geq 0 ,\]
which shows that $r(X)\in\ZZ_{\geq0}[X]$. Next, we observe that
\begin{equation}\label{cyclicdivisibility}
P(X)\cdot\sum_{i=0}^{p-1}X^i\equiv\sum_{i=0}^{p-1}P(1)X^i\bmod(X^p-1),
\end{equation}
hence, $f(X)$ is divisible by $\sum_{i=0}^{p-1}X^i$ in $\ZZ[X]$, if and only if 
\[
r_0=r_1=\dotsb=r_{p-1}.\]
Equation \eqref{cyclicdivisibility} holds also in characteristic $q$, so the fact that $\sum_{i=0}^{p-1}X^i$ divides $\bar{f}(X)$ in $\ZZ_q[X]$ implies that 
\[
r_0\equiv r_1\equiv \dotsb\equiv r_{p-1}\bmod q .
\]
Denoting this common residue value by $c \in \{ 0, \ldots, q-1 \}$, we may write $r_i=s_i q + c$ for some integer $s_i\geq0$, so that 
%Let $c$ be the remainder of the division of $r_i$ by $q$, so that $r_i=c+qs_i$, for some integer $s_i\geq0$. Therefore, 
%\[f(1)=r(1)=pr+q\sum_{i=0}^{s-1}s_i.\]
\[f(1)=r(1)=\sum_{i=0}^{p-1}r_i = q\sum_{i=0}^{p-1}s_i + pc.\]
Since $q>f(1)$, we must have $s_i=0$ for all $i$, thus showing that $r_i=c$ for all $i$, completing the proof.
\end{proof}

%\begin{proof}
%Consider the Euclidean division of $f(X)$ by $X^p-1$ in $\ZZ[X]$; the main fact that will be useful here is that the remainder, $r(X)$, has nonnegative coefficients. Indeed, if $n=pk+s$ is the Euclidean division of $n$ by $p$, we have $X^n\equiv X^s\bmod(X^p-1)$, so if
%\[f(X)=\sum_{i=0}^{\deg f}f_iX^i, \;\;\; r(X)=\sum_{i=0}^{p-1}r_iX^i,\]
%then we obviously have
%\[r_i=\sum_{j\equiv i\bmod p}f_j,\]
%showing our claim that $r(X)\in\ZZ_{\geq0}[X]$. Next, we observe that
%\begin{equation}\label{cyclicdivisibility}
%P(X)\cdot\sum_{i=0}^{p-1}X^i\equiv\sum_{i=0}^{p-1}P(1)X^i\bmod(X^p-1),
%\end{equation}
%hence, $f(X)$ is divisible by $\sum_{i=0}^{p-1}X^i$ in $\ZZ[X]$, if and only if 
%\[
%r_0=r_1=\dotsb=r_{p-1}.\]
%Equation \eqref{cyclicdivisibility} holds also in characteristic $q$, therefore, the fact that $\sum_{i=0}^{p-1}X^i$ in $\ZZ[X]$ divides $\bar{f}(X)$ in $\ZZ_q[X]$ implies that 
%\[
%r_0\equiv r_1\equiv \dotsb\equiv r_{p-1}\bmod q.
%\]
%Let $r$ be the remainder of the division of $r_i$ by $q$, so that $r_i=r+qs_i$, for some integer $s_i\geq0$. Therefore, 
%\[f(1)=r(1)=pr+q\sum_{i=0}^{s-1}s_i.\]
%\[f(1)=r(1)=pr+q\sum_{i=0}^{p-1}s_i.\]
%Since $q>f(1)$, we must have $s_i=0$ for all $i$, thus showing that $r_i=r$ for all $i$, completing the proof.
%\end{proof}

We now proceed to the proof of the main theorem in this section; we define $V_{\bs a}(X_1, X_2, \dotsc, X_n)$ to be the generalized Vandermonde determinant of the $n\times n$ polynomial matrix whose $(i,j)$ entry is $X_i^{a_j}$. If $\bs s=(0,1,\dotsc,n-1)$, we have the standard Vandermonde determinant
\[V_{\bs s}(X_1,\dotsc, X_n)=\prod_{1\leq i<j\leq n}(X_j-X_i).\]
It is known \cite{GenVandermonde} that $V_{\bs s}(X_1,\dotsc, X_n)$ divides $V_{\bs a}(X_1,\dotsc, X_n)$ in $\ZZ[X_1,\dotsc, X_n]$; denote their quotient as $P_{\bs a}(X_1,\dotsc, X_n)$. The sum of the coefficients of $P_{\bs a}$ equals \cite{GenVandermonde}
\[\frac{V_{\bs s}(a_1,a_2,\dotsc, a_n)}{V_{\bs s}(0,1,\dotsc,n-1)}.\]
Define
\[\ga_n=\max\set{\frac{V_{\bs s}(a_1,a_2,\dotsc, a_n)}{V_{\bs s}(0,1,\dotsc,n-1)}: 0\leq a_1<a_2<\dotsb<a_n\leq p-1}\]
and 
\begin{equation}\label{eq:Gammadefinition}
    \Ga_p=\max\set{\ga_n:2\leq n\leq p-1}.
\end{equation}

\begin{thm}\label{thm:genZhang}
%Let $p, q$ be distinct primes such that $r=\ord_p(q)$ and $q>\Ga^\frac{p-1}{r}$. Suppose that $\om$ is a primitive $p$-th root of unity in $\FF=\FF_{q^r}$. Then all minors of $\mc{F}_p$ in $\FF$ are nonzero.
Let $p, q$ be distinct primes such that $r=\ord_p(q)$ and $q>\Ga_p^\frac{p-1}{r}$. If $\om$ is a primitive $p$-th root of unity in $\FF_{q^r}$, then all minors of $\mc{F}_p$ in $\FF_{q^r}$ are non-zero.
\end{thm}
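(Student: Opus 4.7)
The plan is to adapt Zhang's argument by replacing the one-shot use of Lemma \ref{lem:localtoglobal} with a multiplicative bound on the global norm $N = N_{\QQ(\om)/\QQ}$. Fix subsets $A=\{a_1<\dotsb<a_n\}$ and $B=\{b_1<\dotsb<b_n\}$ of $\ZZ_p$. Writing the $(i,j)$-entry of $\mc{F}_p(A,B)$ as $(\om^{b_j})^{a_i}$, the minor is a generalized Vandermonde $D_{A,B} = V_{\bs a}(\om^{b_1},\dotsc,\om^{b_n})$, and the factorization $V_{\bs a} = V_{\bs s}\cdot P_{\bs a}$ introduced just before the theorem gives
\[
D_{A,B} = V_{\bs s}(\om^{b_1},\dotsc,\om^{b_n})\cdot g(\om), \qquad g(X) := P_{\bs a}(X^{b_1},\dotsc,X^{b_n}) \in \ZZ[X].
\]
Two facts about this decomposition drive the argument. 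First, $V_{\bs s}(\om^{b_1},\dotsc,\om^{b_n}) = \prod_{i<j}(\om^{b_j}-\om^{b_i})$, and each factor differs by a unit from some $1-\om^k$ with $\gcd(k,p)=1$, hence has norm $\pm p$; so the Vandermonde piece has norm $\pm p^{\binom{n}{2}}$, coprime to $q$, and is therefore a unit modulo every prime above $q$. Second, $P_{\bs a}$ is a Schur polynomial (up to relabelling of exponents), so it has nonnegative integer coefficients; consequently $g(X)\in\ZZ_{\geq 0}[X]$ and
\[
g(1) = P_{\bs a}(1,\dotsc,1) = \frac{V_{\bs s}(a_1,\dotsc,a_n)}{V_{\bs s}(0,1,\dotsc,n-1)} \leq \Ga_p.
\]

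Now suppose, for contradiction, that $D_{A,B}$ vanishes in $\FF_{q^r}$, equivalently $D_{A,B} \in \mf{Q}_1$ for some prime $\mf{Q}_1$ of $\ZZ[\om]$ above $q$. Since the Vandermonde factor is a unit mod $\mf{Q}_1$, we must have $g(\om)\in\mf{Q}_1$. Because $q$ is unramified in $\QQ(\om)$ with residue degree $r$, the decomposition group of $\mf{Q}_1$ is cyclic of order $r$, generated by the Frobenius $\tau_q\colon\om\mapsto\om^q$; applying powers of $\tau_q$ gives $g(\om^{q^i})\in\mf{Q}_1$ for $i=0,1,\dotsc,r-1$. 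Now group the factors of $N(g(\om)) = \prod_{k\in\ZZ_p^*}g(\om^k)$ by cosets of $\langle q\rangle$ in $\ZZ_p^*$: the coset $k_0\langle q\rangle$ contributes $\tau_{k_0}\bigl(\prod_{i=0}^{r-1}g(\om^{q^i})\bigr) \in \tau_{k_0}(\mf{Q}_1^r) = \mf{Q}_{j(k_0)}^r$, and as $k_0$ ranges over coset representatives, $\mf{Q}_{j(k_0)}$ ranges bijectively over the $(p-1)/r$ primes of $\ZZ[\om]$ above $q$. Hence
\[
N(g(\om))\in\prod_{j=1}^{(p-1)/r}\mf{Q}_j^r = (q\ZZ[\om])^r = q^r\ZZ[\om],
\]
and therefore $q^r \mid N(g(\om))$ in $\ZZ$.

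The hypothesis $q>\Ga_p^{(p-1)/r}$ yields a contradiction via the triangle inequality. Since $g$ has nonnegative coefficients and $|\om^k|=1$ for every $k$,
\[
|N(g(\om))| = \prod_{k\in\ZZ_p^*}|g(\om^k)| \leq g(1)^{p-1}\leq \Ga_p^{p-1}.
\]
Chebotar\"ev's theorem in characteristic zero gives $D_{A,B}\neq 0$, hence $g(\om)\neq 0$ and $N(g(\om))$ is a nonzero integer; combined with $q^r \mid N(g(\om))$ this forces $q^r\leq |N(g(\om))| \leq \Ga_p^{p-1}$, i.e.\ $q\leq \Ga_p^{(p-1)/r}$, contradicting the hypothesis. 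The main conceptual hurdle, compared to \cite{pChebotarev}, is that when $r<p-1$ the minimal polynomial of $\om$ over $\FF_q$ is only a proper factor of $\Phi_p\bmod q$, so a direct application of Lemma \ref{lem:localtoglobal} to $g$ would force $\bar g$ to be divisible by just one irreducible piece and not by $\Phi_p$ itself. Passing to the global norm and exploiting the Frobenius action recovers the missing divisibility as a factor of $r$ in the exponent, which is precisely what the $(p-1)/r$ in the statement reflects.
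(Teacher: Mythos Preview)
Your proof is correct and reaches the same numerical threshold as the paper, but by a genuinely different mechanism. Both arguments start from the Schur factorization $V_{\bs a}=V_{\bs s}\cdot P_{\bs a}$ and the nonnegativity of the coefficients of $P_{\bs a}$, and both exploit the coset structure of $\langle q\rangle$ in $\ZZ_p^*$. From that point they diverge. The paper multiplies $P(X)=P_{\bs a}(X^{b_1},\dotsc,X^{b_n})$ over a set of coset representatives to obtain $f(X)=\prod_j P(X^{t_j})$, so that $\bar f$ is divisible by $\Phi_p$ in $\FF_q[X]$; it then invokes Lemma~\ref{lem:localtoglobal} (the Zhang-type lifting lemma) with $q>f(1)=P(1)^{(p-1)/r}$ to force $\Phi_p\mid f$ in $\ZZ[X]$, and the contradiction is $p\mid f(1)$ against $p\nmid V_{\bs s}(a_1,\dotsc,a_n)$. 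You instead work entirely inside $\ZZ[\om]$: using the decomposition group to see that each $\langle q\rangle$-coset contributes a factor lying in $\mf Q_j^r$ for a distinct prime $\mf Q_j$ above $q$, you obtain $q^r\mid N(g(\om))$, and the contradiction is the size bound $|N(g(\om))|\le g(1)^{p-1}<q^r$. Your route bypasses Lemma~\ref{lem:localtoglobal} altogether at the cost of importing a little more algebraic number theory (unramifiedness, Frobenius, transitivity on primes above $q$); the paper's route stays with polynomials over $\ZZ$ and $\FF_q$ and is in that sense more elementary, but leans on the combinatorial lifting lemma. One small expository point: your closing paragraph reads as if the paper applies Lemma~\ref{lem:localtoglobal} directly to $g$ and runs into trouble when $r<p-1$; in fact the paper already sidesteps that by passing to the product $f$ before invoking the lemma, so the comparison you intend is with Zhang's original primitive-root argument rather than with the present paper.
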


\begin{proof}
If $p\leq5$, we have seen that all minors of $\mc{F}_p$ are non-zero in any characteristic $q\neq p$, so we may assume that $p\geq7$, so that $q$ is certainly odd. Suppose we have an integer $n$ with $2\leq n\leq p-1$ and integer vectors $\bs a=(a_1, a_2,\dotsc, a_n)$, $\bs b=(b_1, b_2,\dotsc, b_n)$, both with strictly increasing coordinates between $0$ and $p-1$.

Assume now that $q>\Ga_p^\frac{p-1}{r}$. In $\ZZ[X]$, we have
\[V_{\bs a}(X^{b_1}, X^{b_2}, \dotsc, X^{b_n})=P_{\bs a}(X^{b_1}, X^{b_2}, \dotsc, X^{b_n})V_{\bs s}(X^{b_1}, X^{b_2}, \dotsc, X^{b_n}).\]
Suppose that $V_{\bs a}(\om^{b_1}, \om^{b_2}, \dotsc, \om^{b_n})=0$ in $\FF$, i.e. the minor of $\mc{F}_p$ corresponding to rows indexed by $a_1,\dotsc, a_n$ and columns indexed by $b_1,\dotsc,b_n$ is zero in $\FF$. Since $V_{\bs s}(\om^{b_1}, \om^{b_2}, \dotsc, \om^{b_n})\neq0$ in any characteristic, we must have $P_{\bs a}(\om^{b_1}, \om^{b_2}, \dotsc, \om^{b_n})=0$ in $\FF$. For simplicity, denote
\[P(X)=P_{\bs a}(X^{b_1}, X^{b_2}, \dotsc, X^{b_n}).\]
The polynomial $\bar{P}(X)$ in $\ZZ_q[X]$ has $\om, \om^q,\dotsc, \om^{q^{r-1}}$ as roots. Let $m=\frac{p-1}{r}$ and consider a set of representatives of the subgroup generated by $q$ in $\ZZ_p^*$, say $1,t_1,\dotsc,t_{m-1}$. Next, define
\[f(X)=P(X)P(X^{t_1})\dotsm P(X^{t_{m-1}}),\]
so that the polynomial $\bar{f}(X)\in\ZZ_q[X]$ accepts all powers $\om,\dotsc,\om^{p-1}$ as roots, hence $\sum_{i=0}^{p-1}X^i$ divides $\bar{f}(X)$ in $\ZZ_q[X]$. We have
\[q>\Ga_p^m\geq \ga_n^m\geq P(1)^m=f(1),\]
so applying Lemma \ref{lem:localtoglobal} we get that $\sum_{i=0}^{p-1}X^i$ divides $f(X)$ in $\ZZ[X]$. Plugging in $X=1$, we get that $p$ divides $f(1)=P(1)^m$, hence $p\mid P(1)$. This shows that $p$ divides
\[V_{\bs s}(a_1,a_2,\dotsc,a_n)=\prod_{1\leq i<j\leq n}(a_j-a_i),\]
a contradiction, completing the proof.
\end{proof}

We now indicate some simple cases where a minor is non-zero in any finite characteristic.

\begin{prop}\label{prop:arithmeticprogression}
%Suppose that $q\nmid M$, where $q$ is a prime. Let $A$ be an arithmetic progression in $\ZZ_M$ of step $d$, such that $d(b-b')\not\equiv0\bmod M$, for any $b\neq b'$ in $B\subset\ZZ_M$, where $\abs{A}=\abs{B}$. Then $\ol{D}_{A, B}\neq0$.
Let $q$ be a prime with $q\nmid M$. Let $A$ be an arithmetic progression of step $d$ in $\ZZ_M$, and let $B\subset\ZZ_M$ be a set with $\abs{B}=\abs{A}$ such that $d(b-b')\not\equiv0\bmod M$ for any $b\neq b'$ in $B\subset\ZZ_M$. Then $\ol{D}_{A, B}\neq0$.
\end{prop}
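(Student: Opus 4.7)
The plan is to reduce the determinant $\ol{D}_{A,B}$ to a Vandermonde determinant by factoring suitable powers of $\ze$ out of each column, and then to read off the distinctness of the Vandermonde nodes from the hypothesis $d(b-b')\not\equiv 0 \bmod M$.

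More concretely, write $A=\{a_0 + kd : k=0,1,\dotsc,n-1\}$ where $n=\abs{A}=\abs{B}$, and enumerate $B=\{b_1,\dotsc,b_n\}$. The $(a_0+kd,\,b_j)$ entry of $\ol{\mc F}_M(A,B)$ is
\[
\ze^{-(a_0+kd)b_j} \;=\; \ze^{-a_0 b_j}\cdot\bigl(\ze^{-d b_j}\bigr)^k.
\]
Pulling the column factor $\ze^{-a_0 b_j}$ out of the $j$-th column for each $j$, I obtain
\[
\ol{D}_{A,B} \;=\; \Bigl(\prod_{j=1}^{n}\ze^{-a_0 b_j}\Bigr)\cdot \det\!\bigl((y_j)^k\bigr)_{0\leq k\leq n-1,\,1\leq j\leq n}, \qquad y_j := \ze^{-d b_j}.
\]
The leading product is a power of $\ze$, hence a unit in $\FF_{q^f}$, so it suffices to show that the remaining $n\times n$ matrix $(y_j^k)_{k,j}$ has nonzero determinant.

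The matrix $(y_j^k)_{k,j}$ is a standard Vandermonde matrix in the nodes $y_1,\dotsc,y_n$, so its determinant equals $\prod_{1\leq i<j\leq n}(y_j - y_i)$. This is nonzero in $\FF_{q^f}$ precisely when the $y_j$ are pairwise distinct. Since $\gcd(q,M)=1$, the chosen $\ze$ is a primitive $M$-th root of unity in $\FF_{q^f}$, so $\ze^{t}=1$ if and only if $M\mid t$. Thus $y_i=y_j$ is equivalent to $d(b_i-b_j)\equiv 0 \bmod M$, which is precisely ruled out by the hypothesis. Hence $\ol{D}_{A,B}\neq 0$.

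There is essentially no obstacle here: the proof is a one-step column rescaling followed by a Vandermonde computation, and the hypothesis on $B$ is exactly the distinctness condition needed. The only minor point to note is that $\ze$ really does have full multiplicative order $M$ in $\FF_{q^f}$, which follows from $q\nmid M$ and the definition of $\ol{\mc F}_M$ via a fixed $M$-th root of unity in $\FF_{q^f}$.
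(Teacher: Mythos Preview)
Your proof is correct and follows essentially the same approach as the paper's: factor a power of $\ze$ out of each column to reduce $\ol{D}_{A,B}$ to a Vandermonde determinant in the nodes $\ze^{-d b_j}$, and then use the hypothesis $d(b-b')\not\equiv 0\bmod M$ together with the primitivity of $\ze$ to conclude the nodes are pairwise distinct. The paper's proof is the same one-line computation, just written more tersely.
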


\begin{proof}
Assume that $A=\set{a_1,\dotsc,a_n}$, where $a_j=a+(j-1)d$, $1\leq j\leq n$, for some $a, d\in\ZZ_M$, and let $B=\set{b_1,\dotsc,b_n}$. Then,
\[
\ol{D}_{A,B}=\pm\prod_{j=1}^n\ze^{ab_j}\cdot\ol{D}_{A-a,B}=\pm\prod_{j=1}^n\ze^{ab_j}\cdot V_{\bs s}(\ze^{-db_1},\dotsc,\ze^{-db_n})\neq0. \qedhere
\]
\end{proof}

It is not hard to see that for $p=2, 3, 5$, every subset $A$ of $\ZZ_p$ is an arithmetic progression, and there exists a corresponding set $B$ satisfying the condition in Proposition~\ref{prop:arithmeticprogression}. For instance, $A=\{ 0, 1, 4 \} = \{ 6, 10, 14 \}$ is an arithmetic progression of step $4$ in $\ZZ_5$, and $B$ can be chosen as $\{ 0, 1, 2 \}$. We thus obtain the following:
%It is not hard to check that every subset of $\ZZ_p$, where $p=2, 3$ or $5$, is an arithmetic progression. We thus obtain the following:

%\begin{cor}\label{cor:235cheb}
%    Let $p=2, 3$ or $5$. Then $\mc{F}_p$ has the $q$-Chebotarev property for all primes $q\neq p$.
%\end{cor}

\begin{prop}\label{prop:235cheb}
    Let $p=2, 3$ or $5$. Then $\mc{F}_p$ has the $q$-Chebotarev property for all primes $q\neq p$.
\end{prop}

On the other hand, there exists a subset of $\ZZ_7$ that is not an arithmetic progression, namely $\set{0,1,3}$. However, a quick numerical search yields the following: 
%$p=7$ is the smallest prime for which $\ZZ_p$ has a subset that is not an arithmetic progression, namely $\set{0,1,3}$. With a quick numerical search, we verify:

\begin{prop}\label{prop:7cheb}
    The matrix $\mc{F}_7$ has the $q$-Chebotarev property for all primes $q\neq 2, 7$.
%    $\mc{F}_7$ has the $q$-Chebotarev property for all primes $q\neq 2, 7$.
\end{prop}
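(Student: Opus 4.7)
The plan is to combine three results already established in the paper: Proposition~\ref{prop:unitarycomplementarity} cuts the required minor size to $r \leq \lfloor 7/2 \rfloor = 3$; Proposition~\ref{prop:arithmeticprogression} disposes of any pair $(A,B)$ in which at least one set is an arithmetic progression; and Proposition~\ref{prop:affinenorminvariance} collapses the remaining cases to a single explicit determinant via the affine action on $\ZZ_7$. Sizes $r=1$ and $r=2$ are immediate: entries of $\ol{\mc{F}}_7$ are powers of a primitive seventh root of unity and hence nonzero, while every $2$-subset of $\ZZ_7$ is a two-term arithmetic progression whose step $d \in \ZZ_7^*$ trivially satisfies the hypothesis $d(b-b') \not\equiv 0 \bmod 7$ of Proposition~\ref{prop:arithmeticprogression}, since $7$ is prime.

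For $r = 3$, Proposition~\ref{prop:arithmeticprogression} again yields $\ol{D}_{A,B} \neq 0$ whenever at least one of $A$ or $B$ is a three-term arithmetic progression, so we may assume both $A$ and $B$ are non-AP $3$-subsets. There are exactly $\binom{7}{3} - 21 = 14$ such subsets, and an orbit-stabilizer computation identifies them as a single orbit of the affine group $\Aff(\ZZ_7)$ (of order $42$) with representative $\{0,1,3\}$: its stabilizer is the order-three cyclic group generated by $x \mapsto 2x+1$, giving an orbit of size $42/3 = 14$. Two applications of Proposition~\ref{prop:affinenorminvariance}, once to transform $A$ and once to transform $B$, then reduce every remaining case to
\[\Norm(D_{A,B}) = \Norm\bigl(D_{\{0,1,3\},\{0,1,3\}}\bigr).\]

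It remains to compute this single norm. Setting $D := D_{\{0,1,3\},\{0,1,3\}}$ and expanding the $3\times 3$ determinant of $\mc{F}_7[\{0,1,3\},\{0,1,3\}]$ produces a short integer combination of $\omega, \omega^2, \omega^3, \omega^6$. A direct calculation of $D\overline{D}$, using $\overline{\omega}=\omega^6$ together with $1+\omega+\cdots+\omega^6=0$, collapses to the rational integer $14$. Since $|D|^2 \in \QQ$ is Galois-invariant, grouping the six Galois automorphisms into the three complex-conjugate pairs $(\sigma_k,\sigma_{-k})$ for $k=1,2,3$ gives $|\sigma_k(D)|^2 = 14$ for each $k$, hence
\[\Norm(D) = \prod_{k=1}^{3} |\sigma_k(D)|^2 = 14^3 = 2^3 \cdot 7^3.\]
Consequently $q \nmid \Norm(D_{A,B})$ for every prime $q \notin \{2,7\}$ and every pair of index sets in $\ZZ_7$, and the equivalence between conditions (3) and (4) in the definition of the $q$-Chebotarev property finishes the proof.

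The only substantive step is the orbit classification in the $r=3$ case: if the non-AP $3$-subsets were to split into several affine orbits, several inequivalent determinants would need to be checked separately. The orbit-stabilizer count rules this out, after which the entire argument collapses to the single one-line simplification of $D\overline{D}$.
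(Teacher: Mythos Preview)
Your proof is correct and follows essentially the same route as the paper's: reduce to $r\le 3$ via Proposition~\ref{prop:unitarycomplementarity}, dispose of arithmetic progressions via Proposition~\ref{prop:arithmeticprogression}, use affine invariance to collapse the non-AP $3$-subsets to the single case $A=B=\{0,1,3\}$, and compute $\Norm(D_{A,A})=2^37^3$. The only cosmetic differences are that you invoke an orbit--stabilizer count where the paper enumerates directly, and you obtain the norm by first computing $|D|^2=14$ and cubing, whereas the paper exploits the quadratic-residue invariance of $3-\om^3-\om^5-\om^6$.
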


\begin{proof}
    By the remark following Proposition \ref{prop:unitarycomplementarity}, it suffices to show that for every $A,B\subset\ZZ_7$ with $\abs{A}=\abs{B} \leq 3$, the norm of $D_{A,B}$ is not divisible by any prime $q\neq 2, 7$. 
    The case $\abs{A}=\abs{B} = 1$ is obvious. 
    For the case $\abs{A}=\abs{B} = 2$, note that every $2$-element subset of $\ZZ_7$ is trivially an arithmetic progression, and thus, Proposition~\ref{prop:arithmeticprogression} implies that for any prime $q \neq 7$, we have $\ol{D}_{A, B}\neq0$, equivalently, $q\nmid \Norm(D_{A,B})$. 
    
    It remains to check the case $\abs{A}=\abs{B}=3$. 
    If either $A$ or $B$ is an arithmetic progression, then similarly as above, we have $q\nmid \Norm(D_{A,B})$ for any prime $q \neq 7$. So, we assume that neither $A$ nor $B$ is an arithmetic progression in $\ZZ_7$. Since $\Norm(D_{A,B})$ is invariant under applying affine transformations on $A$ and $B$ (see Proposition~\ref{prop:affinenorminvariance}), we may further assume that both $A$ and $B$ contain $0$ and $1$. The only $3$-element subsets of $\ZZ_7$ that contain $0$ and $1$ but are not arithmetic progressions, are $\set{0,1,3}$ and $\set{0,1,5}$. However, the set $\set{0,1,3}$ is mapped to $\set{0,1,5}$ via the linear transformation $x\mapsto 5x$, so we only need to check $D_{A,A}$ with $A=\set{0,1,3}$, given by 
    \[D_{A,A}=\begin{vmatrix}
        1 & 1 & 1\\
        1 & \om & \om^3\\
        1 & \om^3 & \om^2
    \end{vmatrix}=-\om-\om^2+3\om^3-\om^6=\om^3(3-\om^3-\om^5-\om^6).\]
    Since $\set{3,5,6}$ is the set of quadratic non-residues in $\ZZ_7$, the term $3-\om^3-\om^5-\om^6$ is invariant under the maps $\om\mapsto\om^k$, where $k\in\set{1, 2, 4}$. Therefore, we have 
    \[\Norm(D_{A,A})=\Norm(\om^3)(3-\om^3-\om^5-\om^6)^3(3-\om-\om^2-\om^4)^3=2^37^3,\]
    which is indeed divisible by $2$ and $7$ but not by any other prime. This completes the proof. 
%    which is indeed not divisible by any prime other than $2$ and $7$. This completes the proof. 
\end{proof}

We will close this Section by showing that for any prime $p\geq7$, there is some prime $q\neq p$, such that $\mc{F}_p$ does not satisfy the $q$-Chebotar\"ev property. To find such primes $q$, we must search for minors whose rows and columns are not indexed by arithmetic progressions, due to Proposition \ref{prop:arithmeticprogression}. In such cases, the minors are generalized Vandermonde determinants on $p$th roots of unity, which are, in general, hard to compute, unless the row set $\bs a$ is \emph{almost} an arithmetic progression. For example, take $\bs a=\set{0,1,2,\dotsc,n-2,n}$, so that the first $n-1$ elements are in arithmetic progression, but $\bs a$ itself is not.

    We recall \cite[equation (11)]{DVB08} that the quotient $P_{\bs a}(X_1,\dotsc,X_n)$ equals the determinant of
    \begin{equation}\label{eq:quotient}
        M=\begin{bmatrix}
    S^{a_1}(X_1) & S^{a_2}(X_1) & \ldots & S^{a_n}(X_1)\\
    S^{a_1-1}(X_1,X_2) & S^{a_2-1}(X_1,X_2) & \ldots & S^{a_n-1}(X_1,X_2)\\
    \vdots & \vdots & \ddots & \vdots \\
    S^{a_1-n+1}(X_1,X_2,\dotsc,X_n) & S^{a_2-n+1}(X_1,X_2,\dotsc,X_n) & \ldots & S^{a_n-n+1}(X_1,X_2,\dotsc,X_n)
\end{bmatrix},
    \end{equation}
    where $S^a(X_1,X_2,\dotsc,X_k)$ denotes the complete symmetric polynomial in $k$ variables of fixed homogeneous degree $a$, and equals
\[\sum_{\mathbf{p}\in(\mathbb{N}\cup\{0\})^k, |\mathbf{p}|=a}\left(\prod_{j=1}^k X_j^{p_j}\right),\]
where $\mathbf{p}=(p_1,p_2,\dotsc,p_k)$ and $|\mathbf{p}|=|p_1|+|p_2|+\dotsb+|p_k|$. It is understood that $S^a(X_1,X_2,\dotsc,X_k)=0$ if $a<0$. 

When $a_1=0, a_2=1, \dotsc, a_{n-1}=n-2$, $M$ is upper triangular and the diagonal elements are all equal to $1$ except possibly the last one; hence $\det M=S^{a_n-n+1}(X_1,\dotsc,X_n)$. If, in addition, $a_n=n$, we get $\det M=X_1+X_2+\dotsb+X_n$. This shows that if we have $A=\set{0,1,\dotsc,n-2,n}$ and $B=\set{b_1,\dotsc,b_n}$, the condition that a prime $q\neq p$ divides
\begin{equation}\label{eq:resultant}
    \Norm(\om^{b_1}+\dotsb+\om^{b_n})=R(B(X),\Phi_p(X)),
\end{equation}
implies the fact that $\mc{F}_p$ does not satisfy the $q$-Chebotar\"ev property, where $R(B(X),\Phi_p(X))$ is the resultant of $B(X)=\sum_{b\in B}X^b$ and $\Phi_p(X)$.

\begin{thm}\label{thm:Gauss}
    Let $p\geq7$ be a prime and $q$ another prime, such that $q\mid\frac{p\pm1}{4}$, where the sign is chosen such that $p\pm1$ is a multiple of $4$. Then, $\mc{F}_p$ does not satisfy the $q$-Chebotar\"ev property.
\end{thm}

\begin{proof}
    Let $n=\frac{p-1}{2}$ and $B=\set{b_1,\dotsc,b_n}$ be the set of quadratic residues $\bmod p$. If 
    \[G(p)=\sum_{k=0}^{p-1}\om^{k^2}\]
    is the standard quadratic Gauss sum $\bmod p$, we have
    \[G(p)=1+2(\om^{b_1}+\dotsb+\om^{b_n}).\]
    Using the standard formula for the Gauss sum \cite{washington}, we then have
    \[\om^{b_1}+\dotsb+\om^{b_n}=\frac{\sqrt{\pm p}-1}{2},\]
    where the sign is chosen such that $p\pm 1$ is divisible by $4$. This belongs to the unique quadratic subfield $L$ of $\QQ(\om)$, and its conjugate in $L$ is $\frac{-\sqrt{\pm p}-1}{2}$, therefore
    \[\Norm(\om^{b_1}+\dotsb+\om^{b_n})=\bra{\frac{1\pm p}{2}}^{\frac{p-1}{2}}.\]
    %Since $p\geq7$, $\frac{p\pm1}{4}$ is coprime to $p$ and greater than $1$, 
    Since $p\geq7$, the number $\frac{p\pm1}{4}$ is greater than $1$ and coprime to $p$, 
    thus, there are primes $q\neq p$ dividing $\frac{p\pm1}{4}$, which also divide $\Norm(D_{A,B})$ with $A=\set{0,1,2,\dotsc,\frac{p-5}{2},\frac{p-1}{2}}$ and $B$ the set of quadratic residues $\bmod p$, completing the proof.
\end{proof}

\bigskip\bigskip

%\input{Sections/pq}

%\section{Proof of \texorpdfstring{$N$}{} square-free with conditions}
\section{Proof of Theorems \ref{thm:inductionstep} and \ref{thm:mainresult2} and consequences}

\bigskip\bigskip

In this Section, we will prove Theorems \ref{thm:inductionstep} and \ref{thm:mainresult2}.

\begin{proof}[Proof of Theorem \ref{thm:inductionstep}]
According to Lemma \ref{lem:mnprincipalpermutation}, we will consider the $N'$-principal minors of $\mc{F}=\mc{F}_{N'}\otimes\mc{F}_p$ instead of those of $\mc{F}_{N}=\mc{F}_{pN'}$.

Now let $\mc{M}, \mc{L}\ssq\ZZ_N$ with $\abs{\mc{M}} = \abs{\mc{L}}$, and consider the submatrix $\mc{F}[\mc{M},\mc{L}]$. If all elements of $\mc{M}$ and $\mc{L}$ have the same residue $\bmod N'$, then the minor $D_{\mc{M},\mc{L}}:=\det\mc{F}[\mc{M},\mc{L}]$ equals a minor of $\mc{F}_p$ up to a root of unity, and is hence non-zero by Chebotar\"ev's Theorem.

%According to Lemma \ref{lem:mnprincipalpermutation}, it suffices to consider the $N'$-principal minors of $\mc{F}=\mc{F}_{N'}\otimes\mc{F}_p$.
%
%Now let $\mc{M}, \mc{L}\ssq\ZZ_N$, and consider the square submatrix $\mc{F}(\mc{M},\mc{L})$. If all elements of $\mc{M}$ and $\mc{L}$ have the same residue $\bmod N'$, then the minor $D_{\mc{M},\mc{L}}:=\det\mc{F}(\mc{M},\mc{L})$ equals a minor of $\mc{F}_p$ up to a root of unity, and is hence nonzero by Chebotarev's Theorem.

%So, suppose that the elements of $\mc{M}$ belong to at least two classes $\bmod N'$; we denote
%\[\mc{M}_a=\set{b\in \mc{M}:b\equiv a\bmod N'}.\]
%Also, denote $M=\abs{\mc{M}}$ and $M_a=\abs{\mc{M}_a}$, so that 
%\[M=\sum_{a=0}^{N'-1}M_a,\]
%and similarly for $L_a$ and $L$. 
%for $L_a$ and $L$. It holds $L_a, M_a\leq p$ for all $a$. The fact that $\mc{F}(\mc{M},\mc{L})$ is $N'$-principal is equivalent to $M_a=L_a$, for all $0\leq a\leq N'-1$.

Now, suppose that the elements of $\mc{M}$ belong to at least two classes $\bmod N'$. 
Defining 
\[\mc{M}_a:=\set{b\in \mc{M}:b\equiv a\bmod N'}, 
\quad \mc{L}_a:=\set{b\in \mc{L}:b\equiv a\bmod N'}
\quad \text{for} \;\; a = 0, \ldots, N'-1 ,\]
we have 
\[M=\sum_{a=0}^{N'-1}M_a, \quad L=\sum_{a=0}^{N'-1}L_a , \]
where $M:=\abs{\mc{M}}$, $L:=\abs{\mc{L}}$, $M_a:=\abs{\mc{M}_a}$ and $L_a:=\abs{\mc{L}_a}$ for $a = 0, \ldots, N'-1$. 
%Setting 
%\[\mc{M}_a:=\set{b\in \mc{M}:b\equiv a\bmod N'} \quad \text{for} \;\; a = 0, \ldots, N'-1 ,\]
%we have 
%\[M=\sum_{a=0}^{N'-1}M_a,\]
%where $M:=\abs{\mc{M}}$ and $M_a:=\abs{\mc{M}_a}$.
%Similarly, we define $\mc{L}_a:=\set{b\in \mc{L}:b\equiv a\bmod N'}$ for $a = 0, \ldots, N'-1$, so that $L=\sum_{a=0}^{N'-1}L_a$, where $L:=\abs{\mc{L}}$ and $L_a:=\abs{\mc{L}_a}$. 
Clearly, we have $L_a, M_a\leq p$ for all $a$, and the fact that $\mc{F}(\mc{M},\mc{L})$ is $N'$-principal corresponds to $M_a=L_a$ for all $a$.
Consider the polynomial $D(z_1,\dotsc,z_M)=\det\bra{z_n^m}_{m\in\mc{M}, 1\leq n\leq M}$; it holds $D_{\mc{M},\mc{L}}=D(\om^{\ell_1},\dotsc,\om^{\ell_M})$, where $\mc{L}=\set{\ell_1,\dotsc,\ell_M}$ and $\ell_j\in\mc{L}_a$ if and only if 
\[L_0+\dotsb+L_{a-1}<j\leq L_0+\dotsb+L_a.\] 
If $z_i=z_j$ for some $i\neq j$, then $D(z_1,\dotsc,z_M)=0$, therefore we have the obvious factorization
\begin{equation}\label{eq:Nfactor1}
D(z_1,\dotsc,z_M)=P(z_1,\dotsc,z_M)\prod_{1\leq i<j\leq M}(z_j-z_i),
\end{equation}
where $P(z_1,\dotsc,z_M)$ has integer coefficients.
Now, $D_{\mc{M},\mc{L}}=0$ if and only if $Q(z)=P(z^{\ell_1},\dotsc,z^{\ell_M})$ is divided by $\Phi_N(z)$, the $N$-th cyclotomic polynomial. Putting $z=\ze=\om^p$, we would get the divisibility relation in the cyclotomic ring $\ZZ[\ze]$,
\[\Phi_N(\ze)\mid P(\underbrace{1,\dotsc,1}_{M_0 \text{ terms}},\underbrace{\ze,\dotsc,\ze}_{M_1 \text{ terms}}, \underbrace{\ze^2,\dotsc,\ze^2}_{M_2 \text{ terms}}, \dotsc, \underbrace{\ze^{N'-1},\dotsc,\ze^{N'-1}}_{M_{N'-1} \text{ terms}}),\]
or equivalently, that $P(1,\dotsc,1,\ze,\dotsc,\ze,\dotsc,\ze^{q-1},\dotsc,\ze^{q-1})$ is a multiple of $pu=\Phi_N(\ze)$ in $\ZZ[\ze]$, where $u$ a unit of $\ZZ[\ze]$, by \eqref{eq:valuecyclotomic}. We will show that this is not the case.

We will distinguish the variables according to the residues $\bmod N'$, by writing 
\[z_{M_0+\dotsb+M_{a-1}+j}=z_{a,j}, \;\;\;\text{ and }\;\;\; m_{M_0+\dotsb+M_{a-1}+j}=m_{a,j}\]
if $1\leq j\leq M_a$, where
\[\mc{M}_a=\set{m_{a,1}, m_{a,2},\dotsc,m_{a,M_a}}.\]
Therefore, \eqref{eq:Nfactor1} is transformed to
\begin{equation}\label{eq:Nfactor2}
\begin{split}
& D(z_{0,1},\dotsc,z_{0,M_0},\dotsc,z_{N'-1,1},\dotsc,z_{N'-1,M_{N'-1}}) \\ 
&=P(z_{0,1},\dotsc,z_{0,M_0},\dotsc,z_{N'-1,1},\dotsc,z_{N'-1,M_{N'-1}}) \\  
& \qquad \cdot\prod_{a=0}^{N'-1}\prod_{1\leq i<j\leq M_a}(z_{a,j}-z_{a,i}) \cdot \prod_{0\leq a<b\leq N'-1}\prod_{\substack{1\leq i\leq M_a\\1\leq j\leq M_b}}(z_{b,j}-z_{a,i}). 
\end{split}
\end{equation} 
%\begin{align}\label{eq:Nfactor2}
%& D(z_{0,1},\dotsc,z_{0,M_0},\dotsc,z_{N'-1,1},\dotsc,z_{N'-1,M_{N'-1}})=\\ \nonumber
%&=P(z_{0,1},\dotsc,z_{0,M_0},\dotsc,z_{N'-1,1},\dotsc,z_{N'-1,M_{N'-1}})\cdot\prod_{a=0}^{N'-1}\prod_{1\leq i<j\leq M_a}(z_{a,j}-z_{a,i}) \cdot \prod_{0\leq a<b\leq N'-1}\prod_{\substack{1\leq i\leq M_a\\1\leq j\leq M_b}}(z_{b,j}-z_{a,i}).
%\end{align}
In order to evaluate the polynomial $P$ for $z_{a,i}=\ze^a$, $\forall a, i$, we will apply the partial differential operator
\begin{equation}\label{eq:Npdo}
\prod_{a=0}^{N'-1}\bra{z_{a,1}\frac{\partial}{\partial z_{a,1}}}^0\bra{z_{a,2}\frac{\partial}{\partial z_{a,2}}}\dotsm\bra{z_{a,M_a}\frac{\partial}{\partial z_{a,M_a}}}^{M_a-1}
\end{equation}
to both sides of \eqref{eq:Nfactor2}, and then evaluate at $z_{a,i}=\ze^a$ for each $a$ and $i$.
%on both sides of \eqref{eq:Nfactor2}, and then evaluate at $z_{a,i}=\ze^a$, $\forall a, i$.

When we apply \eqref{eq:Npdo} on the right-hand side of \eqref{eq:Nfactor2}, we get a sum of polynomials, one of which is
\begin{equation}\label{eq:Nmainterm}
\begin{split}
& P(z_{0,1},\dotsc,z_{0,M_0},\dotsc,z_{q-1,1},\dotsc,z_{q-1,M_{q-1}}) \\
& \qquad \cdot \prod_{a=0}^{N'-1}\prod_{j=1}^{M_a}z_{a,j}^{j-1} \cdot \prod_{0\leq a<b\leq N'-1}\prod_{\substack{1\leq i\leq M_a\\1\leq j\leq M_b}}(z_{b,j}-z_{a,i}),
\end{split}
\end{equation}
%\begin{equation}\label{eq:Nmainterm}
%P(z_{0,1},\dotsc,z_{0,M_0},\dotsc,z_{q-1,1},\dotsc,z_{q-1,M_{q-1}})\prod_{a=0}^{N'-1}\prod_{j=1}^{M_a}z_{a,j}^{j-1} \cdot \prod_{0\leq a<b\leq N'-1}\prod_{\substack{1\leq i\leq M_a\\1\leq j\leq M_b}}(z_{b,j}-z_{a,i}),
%\end{equation}
and the rest have at least one factor of the form $z_{a,j}-z_{a,i}$, which vanish when we put all $z_{a,i}$ equal to $\ze^a$. The term \eqref{eq:Nmainterm} is obtained
\[\prod_{a=0}^{N'-1}(M_a-1)!(M_a-2)!\dotsm 1!0!\]
times, hence
\begin{equation}\label{eq:Nevaluation1}
\begin{split}
& \prod_{a=0}^{N'-1}\bra{z_{a,1}\frac{\partial}{\partial z_{a,1}}}^0\dotsm\bra{z_{a,M_a}\frac{\partial}{\partial z_{a,M_a}}}^{M_a-1} \\
& \qquad \quad D(z_{0,1},\dotsc,z_{0,M_0},\dotsc,z_{N'-1,1},\dotsc,z_{N'-1,M_{N'-1}}) \Big|_{z_{a,i}=\ze^a} \\  
& = P(\underbrace{1,\dotsc,1}_{M_0 \text{ terms}},\underbrace{\ze,\dotsc,\ze}_{M_1 \text{ terms}}, \dotsc, \underbrace{\ze^{N'-1},\dotsc,\ze^{N'-1}}_{M_{N'-1} \text{ terms}}) \\
& \qquad \quad \cdot \prod_{a=1}^{N'-1}\ze^{aM_a(M_a-1)/2} \cdot \prod_{0\leq a<b\leq N'-1}(\ze^b-\ze^a)^{M_aM_b} \cdot \prod_{a=0}^{N'-1}\prod_{k=0}^{M_a-1}k! .
\end{split}
\end{equation}
%\begin{align}\label{eq:Nevaluation1}
%\prod_{a=0}^{N'-1}\bra{z_{a,1}\frac{\partial}{\partial z_{a,1}}}^0\dotsm\bra{z_{a,M_a}\frac{\partial}{\partial z_{a,M_a}}}^{M_a-1}D(z_{0,1},\dotsc,z_{0,M_0},\dotsc,z_{N'-1,1},\dotsc,z_{N'-1,M_{N'-1}})|_{z_{a,i}=\ze^a} =\\ \nonumber
%=\prod_{a=1}^{N'-1}\ze^{aM_a(M_a-1)/2}\prod_{0\leq a<b\leq N'-1}(\ze^b-\ze^a)^{M_aM_b}\prod_{a=0}^{N'-1}\prod_{k=0}^{M_a-1}k!\cdot P(\underbrace{1,\dotsc,1}_{M_0 \text{ terms}},\underbrace{\ze,\dotsc,\ze}_{M_1 \text{ terms}}, \dotsc, \underbrace{\ze^{N'-1},\dotsc,\ze^{N'-1}}_{M_{N'-1} \text{ terms}}).
%\end{align}
We now apply \eqref{eq:Npdo} on the left-hand side of \eqref{eq:Nfactor2}, getting
\begin{equation}\label{eq:Ndeterminant}
    \det\bra{
    \begin{array}{c|c|c|c}
    D_{0,0}(z_{0,1},\dotsc,z_{0,M_0}) & D_{0,1}(z_{1,1},\dotsc,z_{1,M_1}) & \ldots & D_{0,N'-1}(z_{N'-1,1},\dotsc,z_{N'-1,M_{N'-1}})\\ \hline
    D_{1,0}(z_{0,1},\dotsc,z_{0,M_0}) & D_{1,1}(z_{1,1},\dotsc,z_{1,M_1}) & \ldots & D_{1,N'-1}(z_{N'-1,1},\dotsc,z_{N'-1,M_{N'-1}})\\ \hline
    \vdots & \vdots & \ddots & \vdots\\ \hline
    D_{N'-1,0}(z_{0,1},\dotsc,z_{0,M_0}) & D_{N'-1,1}(z_{1,1},\dotsc,z_{1,M_1}) & \ldots & D_{N'-1,N'-1}(z_{N'-1,1},\dotsc,z_{N'-1,M_{N'-1}})
    \end{array}
    }
\end{equation}
where the block $D_{a,b}$ is a $M_a\times M_b$ matrix given by
\begin{equation}\label{eq:Nblock}
    \begin{pmatrix}
    z_{b,1}^{m_{a,1}} & m_{a,1}z_{b,2}^{m_{a,1}} & \ldots & m_{a,1}^{M_b-1}z_{b,M_b}^{m_{a,1}}\\
    z_{b,1}^{m_{a,2}} & m_{a,2}z_{b,2}^{m_{a,2}} & \ldots & m_{a,2}^{M_b-1}z_{b,M_b}^{m_{a,2}}\\
    \vdots & \vdots & \ddots & \vdots\\
    z_{b,1}^{m_{a,M_a}} & m_{a,M_a}z_{b,2}^{m_{a,M_a}} & \ldots & m_{a,M_a}^{M_b-1}z_{b,M_b}^{m_{a,M_a}}
    \end{pmatrix},
\end{equation}
or more succinctly, 
\[D_{a,b}(z_{b,1},\dotsc,z_{b,M_b})=\bra{m_{a,i}^{j-1}z_{b,j}^{m_{a,i}}}_{\substack{1\leq i\leq M_a\\1\leq j\leq M_b}}.\]

Putting $z_{a,i}=\ze^a$ for all $a$ and $i$ yields
\begin{equation}\label{eq:Nblockevaluation}
    D_{a,b}(\ze^b,\dotsc,\ze^b)=\bra{m_{a,i}^{j-1}\ze^{ab}}_{\substack{1\leq i\leq M_a\\1\leq j\leq M_b}}=\ze^{ab}\bra{m_{a,i}^{j-1}}_{\substack{1\leq i\leq M_a\\1\leq j\leq M_b}}.
\end{equation}
We consider now a permutation $\sig$ of $\ZZ_{N'}$, such that 
\[M_{\sig(0)}\geq M_{\sig(1)}\geq\dotsb\geq M_{\sig(N'-1)},\]
and permute the blocks of \eqref{eq:Ndeterminant} accordingly, we get that \eqref{eq:Ndeterminant} equals
\begin{equation}\label{eq:Ndeterminant2}
    \det\bra{
    \begin{array}{c|c|c|c}
    D_{\sig(0),\sig(0)} & D_{\sig(0),\sig(1)} & \ldots & D_{\sig(0),\sig(N'-1)}\\ \hline
    D_{\sig(1),\sig(0)} & D_{\sig(1),\sig(1)} & \ldots & D_{\sig(1),\sig(N'-1)}\\ \hline
    \vdots & \vdots & \ddots & \vdots\\ \hline
    D_{\sig(N'-1),\sig(0)} & D_{\sig(N'-1),\sig(1)} & \ldots & D_{\sig(N'-1),\sig(N'-1)}
    \end{array}
    },
\end{equation}
and when evaluated at $z_{a,i}=\ze^a$ for all $a$ and $i$, we get
\begin{equation}\label{eq:Ndeterminant3}
    \det\bra{
    \begin{array}{c|c|c|c}
    \ze^{\sig(0)^2}V_{\sig(0),\sig(0)} & \ze^{\sig(0)\sig(1)}V_{\sig(0),\sig(1)} & \ldots & \ze^{\sig(0)\sig(N'-1)}V_{\sig(0),\sig(N'-1)}\\ \hline
    \ze^{\sig(1)\sig(0)}V_{\sig(1),\sig(0)} & \ze^{\sig(1)^2}V_{\sig(1),\sig(1)} & \ldots & \ze^{\sig(1)\sig(N'-1)}V_{\sig(1),\sig(N'-1)}\\ \hline
    \vdots & \vdots & \ddots & \vdots\\ \hline
    \ze^{\sig(N'-1)\sig(0)}V_{\sig(N'-1),\sig(0)} & \ze^{\sig(N'-1)\sig(1)}V_{\sig(N'-1),\sig(1)} & \ldots & \ze^{\sig(N'-1)^2}V_{\sig(N'-1),\sig(N'-1)}
    \end{array}
    },
\end{equation}
where
\[V_{a,b}=\bra{m_{a,i}^{j-1}}_{\substack{1\leq i\leq M_a\\1\leq j\leq M_b}}.\]
By virtue of Lemma \ref{lem:blockdeterminant}, \eqref{eq:Ndeterminant3} equals
\[\prod_{k=0}^{N'-1}(\det Z_k)^{M_{\sig(k)}-M_{\sig(k+1)}}\det V_{\sig(k),\sig(k)},\]
where $M_{\sig(N')}=0$, and $Z_k$ is the square matrix formed by the first $k+1$ rows and columns of $Z=\bra{\ze^{\sig(i)\sig(j)}}_{0\leq i,j\leq N'-1}$. All $Z_k$ are principal minors of $\mc{F}_{N'}$ with permuted rows and columns, and each $V_{\sig(k),\sig(k)}$ is a Vandermonde matrix with determinant given by 
%equal to
\[\prod_{1\leq i<j\leq M_{\sig(k)}}(m_{\sig(k),j}-m_{\sig(k),i}).\]
%Thus, solving for $P(\underbrace{1,\dotsc,1}_{M_0 \text{ terms}},\underbrace{\ze,\dotsc,\ze}_{M_1 \text{ terms}}, \dotsc, \underbrace{\ze^{N'-1},\dotsc,\ze^{N'-1}}_{M_{N'-1} \text{ terms}})$ in \eqref{eq:Nevaluation1}, we get
Thus, we obtain from \eqref{eq:Nevaluation1} that 
\begin{equation}\label{eq:Nfinalevaluation}
\begin{split}
&P(\underbrace{1,\dotsc,1}_{M_0 \text{ terms}},\underbrace{\ze,\dotsc,\ze}_{M_1 \text{ terms}}, \dotsc, \underbrace{\ze^{N'-1},\dotsc,\ze^{N'-1}}_{M_{N'-1} \text{ terms}})\\
&=\frac{\prod_{k=0}^{N'-1}(\det Z_k)^{M_{\sig(k)}-M_{\sig(k+1)}}\cdot\prod_{k=0}^{N'-1}\prod_{1\leq i<j\leq M_{\sig(k)}}(m_{\sig(k),j}-m_{\sig(k),i})}{\prod_{a=1}^{N'-1}\ze^{aM_a(M_a-1)/2} \cdot \prod_{0\leq a<b\leq N'-1}(\ze^b-\ze^a)^{M_aM_b} \cdot \prod_{a=0}^{N'-1}\prod_{k=0}^{M_a-1}k!}.
\end{split}
\end{equation}
%\begin{multline}\label{eq:Nfinalevaluation}
%P(\underbrace{1,\dotsc,1}_{M_0 \text{ terms}},\underbrace{\ze,\dotsc,\ze}_{M_1 \text{ terms}}, \dotsc, \underbrace{\ze^{N'-1},\dotsc,\ze^{N'-1}}_{M_{N'-1} \text{ terms}})=\\
%=\frac{\prod_{k=0}^{N'-1}(\det Z_k)^{M_{\sig(k)}-M_{\sig(k+1)}}\cdot\prod_{k=0}^{N'-1}\prod_{1\leq i<j\leq M_{\sig(k)}}(m_{\sig(k),j}-m_{\sig(k),i})}{\prod_{a=1}^{N'-1}\ze^{aM_a(M_a-1)/2}\prod_{0\leq a<b\leq N'-1}(\ze^b-\ze^a)^{M_aM_b}\prod_{a=0}^{N'-1}\prod_{k=0}^{M_a-1}k!}.
%\end{multline}
Taking the norm $\Norm=\Norm^{\QQ(\ze)}_\QQ$ on both sides of \eqref{eq:Nfinalevaluation}, we get an integer divisible by $p$ on the left-hand side, whereas on the right-hand side we obtain an integer that is not divisible by $p$; indeed, $p\nmid\Norm{\det(Z_k)}$, $0\leq k\leq N'-1$ by hypothesis, $\Norm(\ze)=\pm1$ by \eqref{eq:normrootofunity}, $\Norm(\ze^a-\ze^b)\mid N'$ by \eqref{eq:valuecyclotomic0}, each $m_{\sig(k),j}-m_{\sig(k),i}$ is not divisible by $p$, and the product of factorials is also not divisible by $p$. This obviously establishes a contradiction, due to our assumption that $D_{\mc{M},\mc{L}}=0$. Therefore, the $N'$-principal minor $D_{\mc{M},\mc{L}}$ is indeed non-zero, as desired.
\end{proof}

Theorem \ref{thm:inductionstep} has a series of consequences.

\begin{cor}\label{cor:pqCheb}
    Let $N=pq$, a product of two distinct primes. Suppose that either $\mc{F}_p$ has the $q$-Chebotarev property or $\mc{F}_q$ has the $p$-Chebotarev property. Then, all principal minors of $\mc{F}_N$ are non-zero. 
\end{cor}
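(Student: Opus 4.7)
The plan is essentially to specialise Theorem \ref{thm:inductionstep} to the case where $N'$ is prime, with the other prime of $N = pq$ serving as the outer factor. The key observation is that the $q$-Chebotarev property of $\mc{F}_M$ is strictly stronger than what Theorem \ref{thm:inductionstep} actually requires of $\mc{F}_{N'}$: the former asserts that \emph{every} minor is nonzero in characteristic $q$, while the latter only needs the nonvanishing of every \emph{principal} minor in that characteristic.

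First I would handle the case in which $\mc{F}_q$ has the $p$-Chebotarev property. Then every minor of $\mc{F}_q$ — and in particular every principal minor — is nonzero in characteristic $p$. Since $q$ is a prime different from $p$, I may set $N' = q$ and apply Theorem \ref{thm:inductionstep} with outer prime $p$ to conclude that every $N'$-principal, and hence every principal, minor of $\mc{F}_{pq}$ is nonzero.

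The second case, where $\mc{F}_p$ has the $q$-Chebotarev property, is entirely symmetric: set $N' = p$ and invoke Theorem \ref{thm:inductionstep} with outer prime $q$. The matrix $\mc{F}_{qp}$ agrees with $\mc{F}_{pq}$ up to the index-permutation convention already absorbed into Lemma \ref{lem:mnprincipalpermutation}, so the same conclusion follows.

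There is no real obstacle here — the corollary is essentially a direct specialisation of Theorem \ref{thm:inductionstep} to the two-prime case, and the only point worth spelling out is that one does not need the \emph{full} $q$-Chebotarev property but merely the weaker statement about principal minors of $\mc{F}_{N'}$ in the relevant finite characteristic. Combined with Corollary \ref{cor:235cheb} and Proposition \ref{prop:7cheb}, this immediately supplies concrete families of $N = pq$ to which the corollary applies.
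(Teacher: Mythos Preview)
Your proposal is correct and matches the paper's approach exactly: the paper states the corollary as a direct consequence of Theorem~\ref{thm:inductionstep} without giving a separate proof, and your argument spells out precisely the intended specialisation (take $N'$ to be whichever prime has the Chebotarev property in the other characteristic, and apply the theorem). Your observation that only the nonvanishing of \emph{principal} minors of $\mc{F}_{N'}$ is needed, rather than the full $q$-Chebotarev property, is also accurate and worth noting.
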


In particular, we get the following:

\begin{cor}\label{cor:smallmultp}
    Let $N=pq$, where $p, q$ are distinct primes and $p\in\set{2,3,5,7}$. Then all principal minors of $\mc{F}_N$ are non-zero.
\end{cor}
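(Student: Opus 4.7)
The plan is to deduce this corollary directly from Corollary \ref{cor:pqCheb} together with the explicit $q$-Chebotarev results already established in Corollary \ref{cor:235cheb} and Proposition \ref{prop:7cheb}. The hypothesis of Corollary \ref{cor:pqCheb} is symmetric in the two prime factors: we only need \emph{one} of $\mc{F}_p$ or $\mc{F}_q$ to satisfy the Chebotarev property in the other prime's characteristic. So I would split the argument according to which prime is the "small" one in $\set{2,3,5,7}$.

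First I would treat the generic case $p\in\set{2,3,5}$. By Corollary \ref{cor:235cheb}, the matrix $\mc{F}_p$ has the $q$-Chebotarev property for every prime $q\neq p$, so Corollary \ref{cor:pqCheb} immediately yields that all principal minors of $\mc{F}_{pq}$ are nonzero. Next I would handle the case $p=7$ with $q\neq 2$: here Proposition \ref{prop:7cheb} supplies the $q$-Chebotarev property for $\mc{F}_7$ (the hypothesis excludes only $q=2,7$, and $q\neq 7$ by distinctness), and again Corollary \ref{cor:pqCheb} closes the case.

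The only remaining scenario is $p=7$, $q=2$, i.e.\ $N=14$. This is the one spot where Proposition \ref{prop:7cheb} does not directly apply, but the symmetry of Corollary \ref{cor:pqCheb} saves us: it suffices to check that $\mc{F}_2$ has the $7$-Chebotarev property, which follows from Corollary \ref{cor:235cheb} applied with prime $2$ and auxiliary prime $7\neq 2$. Thus Corollary \ref{cor:pqCheb} (with the roles of $p$ and $q$ interchanged) again gives the conclusion for $N=14$.

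I do not anticipate a serious obstacle: the argument is a short case analysis and the real content has already been packaged into Corollary \ref{cor:235cheb}, Proposition \ref{prop:7cheb}, and Corollary \ref{cor:pqCheb}. The one small subtlety worth flagging explicitly in the write-up is the symmetric application of Corollary \ref{cor:pqCheb} in the $N=14$ case, so that the reader sees why the apparent gap in Proposition \ref{prop:7cheb} at $q=2$ does not leave any prime $q$ unhandled.
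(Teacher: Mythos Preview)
Your proposal is correct and follows essentially the same approach as the paper, which simply cites Corollaries \ref{cor:235cheb}, \ref{cor:pqCheb} and Proposition \ref{prop:7cheb}. Your write-up is more detailed, and in particular makes explicit the small subtlety at $N=14$ that the paper leaves implicit.
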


\begin{proof}
    It follows immediately from Corollary \ref{cor:pqCheb} and Propositions \ref{prop:235cheb} and \ref{prop:7cheb}.
\end{proof}

The next corollary generalizes the result of Loukaki \cite{Lo24}:

\begin{cor}
    Let $N=pq$, a product of two distinct primes. If $r=\ord_p(q)$ and $q>\Ga^{\frac{p-1}{r}}$, where $\Ga$ as in \eqref{eq:Gammadefinition}, then all principal minors of $\mc{F}_N$ are non-zero.
\end{cor}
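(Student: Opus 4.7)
The plan is to chain together two results that have already been established in the paper, so the corollary is really just a packaging statement. First, I would apply Theorem~\ref{thm:genZhang} to the prime pair $(p,q)$: under the hypotheses $r=\ord_p(q)$ and $q>\Ga_p^{(p-1)/r}$, that theorem yields that every minor of $\mc{F}_p$ is nonzero when its entries are interpreted in $\FF_{q^r}$. By the equivalence of conditions (1) and (2) in the definition of the $q$-Chebotarev property, this is exactly the assertion that $\mc{F}_p$ has the $q$-Chebotarev property.

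Second, I would invoke Corollary~\ref{cor:pqCheb}, which states that if $N=pq$ is a product of two distinct primes and either $\mc{F}_p$ has the $q$-Chebotarev property or $\mc{F}_q$ has the $p$-Chebotarev property, then all principal minors of $\mc{F}_N$ are nonzero. Since the first alternative has just been verified, the conclusion follows immediately.

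I do not anticipate any real obstacle here, since both ingredients are already in place. The content of the corollary is conceptual rather than technical: it records that the improved Zhang-type bound supplied by Theorem~\ref{thm:genZhang} can be fed, via the induction mechanism of Theorem~\ref{thm:inductionstep} (in the $N'=p$ case, which is what powers Corollary~\ref{cor:pqCheb}), into a statement about $\mc{F}_{pq}$. The generalization of Loukaki's theorem \cite{Lo24} comes exactly from the fact that Theorem~\ref{thm:genZhang} no longer demands $q$ be primitive modulo $p$, i.e., $r=p-1$, but instead allows any order $r=\ord_p(q)$ at the cost of the sharper bound $q>\Ga_p^{(p-1)/r}$; beyond substituting this stronger input into the already-proved Corollary~\ref{cor:pqCheb}, there is nothing further to verify.
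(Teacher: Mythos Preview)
Your proposal is correct and matches the paper's own proof, which simply says the result follows immediately from Theorem~\ref{thm:genZhang} and Theorem~\ref{thm:inductionstep}. Routing through Corollary~\ref{cor:pqCheb} (itself the $N'=p$ case of Theorem~\ref{thm:inductionstep}) is exactly the same argument, just with one extra layer of citation.
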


\begin{proof}
    It follows immediately from Theorems \ref{thm:genZhang} and \ref{thm:inductionstep}.
\end{proof}

%At this point, it might be tempting to pose the following number theoretic question, which may be of independent interest:
%At this point, it is natural to pose the following question, which might be of independent number theoretic interest: 
At this point, it is natural to pose the following question, which might be of independent interest in number theory:

\smallskip
\noindent
\emph{For which pairs of distinct primes $(p, q)$ do we have that $\mc{F}_p$ has the $q$-Chebotarev property or that $\mc{F}_q$ has the $p$-Chebotarev property?}
\smallskip

%If this question is answered in the affirmative, it yields the veracity of Conjecture \ref{conj:squarefree} for products of two primes.
%The next one is the first positive result for products of three primes.
In view of Corollary \ref{cor:pqCheb}, finding such a pair $(p,q)$ establishes Conjecture \ref{conj:squarefree} for the case $N = pq$. 
%%%However, such pairs do seem to not be generic. 
However, such pairs do not seem to be generic. For instance, consider a pair of primes $(p,q)$ such that $q\equiv1\bmod 4p$ and $q\mid\Norm(\om^{b_1}+\dotsb+\om^{b_n})$, for some $B=\set{b_1,\dotsc,b_n}\subset\ZZ_p$. The fact that $q\equiv1\bmod 4p$ implies that $\mc{F}_q$ does not satisfy the $p$-Chebotar\"ev property, by Theorem \ref{thm:Gauss}. On the other hand, the fact that $q\mid\Norm(\om^{b_1}+\dotsb+\om^{b_n})$ implies that $\mc{F}_p$ does not satisfy the $q$-Chebotar\"ev property, by equation \eqref{eq:resultant}. A simple calculation for small primes on sets $B$ with only $3$ or $4$ elements, yields the pairs $(p,q) = (11,89), (13,53), (17,953), (19,457)$, as shown by the equations below:
\begin{align*}
    \Norm(1+\om_{11}+\om_{11}^2+\om_{11}^5) &=89\\
    \Norm(1+\om_{13}+\om_{13}^3) &=53\\
    \Norm(1+\om_{17}+\om_{17}^2+\om_{17}^5) &=953\\
    \Norm(1+\om_{19}+\om_{19}^3) &=457,
\end{align*}
where $\om_p=e^{-2\pi i/p}$. We suspect that for every prime $p\geq11$, there is a prime $q\neq p$, such that neither $\mc{F}_p$ satisfies the $q$-Chebotar\"ev property, nor does $\mc{F}_q$ satisfy the $p$-Chebotar\"ev property. Thus, in order to prove Conjecture \ref{conj:squarefree}, even for the special case $N=pq$, the method used in the proof of Theorem \ref{thm:inductionstep} needs to be refined; we should be able to address the local obstructions that appear when $p$ divides some of the norms $\Norm\det (Z_k)$ in \eqref{eq:Nfinalevaluation}, by estimating the $p$-adic valuation on both sides of \eqref{eq:Nfinalevaluation}, proving the desired result by contradiction, perhaps by showing that the $p$-adic valuations on the two sides of this equation are different.

%The following provides a positive result in a special case where $N$ is a product of three primes, namely in the case $N = 2 \cdot 3 \cdot p$ where $p>3$ is a prime. 
%Below, we provide a positive result in a special case where $N$ is a product of three primes, namely the case where $N = 2 \cdot 3 \cdot p$ with $p>3$ prime. 
We continue by providing a positive result in the special case where $N = 2 \cdot 3 \cdot p$ with $p>3$ prime.

\begin{prop}\label{prop:N-6p}
    Let $N=6p$, where $p>3$ is a prime. Then, all principal minors of $\mc{F}_N$ are non-zero.
\end{prop}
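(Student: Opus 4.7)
The plan is to apply Theorem~\ref{thm:inductionstep} with $N=6p$ and $N'=6$. This requires verifying that every principal minor of $\mc{F}_6$ is nonzero in characteristic $p$, i.e., that $p\nmid \Norm(D_{S,S})$ for every $S\ssq\ZZ_6$. Since $p>3$, it suffices to show that each such norm has only $2$ and $3$ as prime divisors, which is a finite check.

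I would first reduce via Proposition~\ref{prop:unitarycomplementarity} (applied to the unitary matrix $\tfrac{1}{\sqrt 6}\mc{F}_6$) to the case $|S|\leq 3$; the cases $|S|\in\{0,1\}$ are immediate. Using Proposition~\ref{prop:affinenorminvariance}, the norm depends only on the affine-equivalence class of $S$, so I would work with one representative per orbit under the $12$-element affine group of $\ZZ_6$. For $|S|=2$, every subset is an arithmetic progression of some step $d\in\{1,\dotsc,5\}$, and since $d^2\not\equiv 0\pmod 6$ for each such $d$, Proposition~\ref{prop:arithmeticprogression} (applied with $B=S$) immediately yields $\ol{D}_{S,S}\neq 0$ in any characteristic coprime to $6$. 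For $|S|=3$, a brief orbit count (the orbit of the non-AP $\{0,1,3\}$ has size $12$, accounting for all $20-8=12$ non-AP $3$-subsets of $\ZZ_6$) shows exactly three affine classes: the step-$1$ AP $\{0,1,2\}$, the step-$2$ AP $\{0,2,4\}$, and the non-AP $\{0,1,3\}$. Both AP classes are handled again by Proposition~\ref{prop:arithmeticprogression}.

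The only remaining case, where the main obstacle lies, is $S=\{0,1,3\}$, which must be computed directly. Using $\om^3=-1$ and $\om^9=\om^3$, expansion along the first row gives
\[
D_{S,S} = \det\begin{pmatrix} 1 & 1 & 1\\ 1 & \om & -1\\ 1 & -1 & -1\end{pmatrix} = -2(\om+1).
\]
Since $\om$ has minimal polynomial $\Phi_6(X)=X^2-X+1$ and Galois conjugate $\om^{-1}$, we get $\Norm(\om+1)=(\om+1)(\om^{-1}+1)=2+(\om+\om^{-1})=3$, hence $\Norm(D_{S,S})=\Norm(-2)\cdot 3=4\cdot 3=12$. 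Since $p>3$, we have $p\nmid 12$, and all hypotheses of Theorem~\ref{thm:inductionstep} are satisfied; we conclude that every principal minor of $\mc{F}_{6p}$ is nonzero. The essential contingency is that this critical minor's norm factors as $2^2\cdot 3$; had it carried another prime factor, the approach would have failed for that prime.
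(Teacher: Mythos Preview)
Your proposal is correct and follows essentially the same approach as the paper: reduce via Theorem~\ref{thm:inductionstep} to showing that all principal minors of $\mc{F}_6$ are nonzero in characteristic $p>3$, cut down to $|S|\leq 3$ using Proposition~\ref{prop:unitarycomplementarity}, dispose of arithmetic progressions via Proposition~\ref{prop:arithmeticprogression}, and handle the single non-AP class $\{0,1,3\}$ by computing $D_{S,S}=-2(1+\om)$ with norm $12$. The only cosmetic difference is that the paper organizes the $|S|=3$ case by splitting on whether $3\in A-A$, whereas you count affine orbits directly; both routes isolate the same three representatives and the same critical computation.
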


\begin{proof}
    By Theorem \ref{thm:inductionstep}, it suffices to check that all principal minors of $\mc{F}_6$ are non-zero in any characteristic $p>3$. By the remark following Proposition \ref{prop:unitarycomplementarity}, it suffices to check the principal minors up to size $3\times 3$, that is, it suffices to verify $\ol{D}_{A,A}\neq0$ for every $A\subset\ZZ_6$ with $\abs{A} \leq 3$. 

    If $\abs{A} = 2$, then $A$ is trivially an arithmetic progression (say, with step size $d$), and since $d^2\not\equiv0\bmod 6$, we have from Proposition~\ref{prop:arithmeticprogression} that $\ol{D}_{A,A}\neq0$. 
    
    Next, let $\abs{A}=3$. If $3\in A-A$, then by Proposition \ref{prop:affinenorminvariance} we may assume $\set{0,3}\subset A$. Then, again by Proposition \ref{prop:affinenorminvariance}, we may assume $A=\set{0,1,3}$, as any $3$-element subset of $\ZZ_6$ can be mapped via an affine transformation to $\set{0,1,3}$. Then
    \[D_{A,A}=\begin{vmatrix}
        1 & 1 & 1\\
        1 & \om & -1\\
        1 & -1 & -1
    \end{vmatrix}=-2\om-2=-2(1+\om)  \]
    with norm given by $\Norm (D_{A,A}) = 4\Norm(1+\om)=12$, which is not divisible by any prime $p>3$. 
    
    Now assume that $3\notin A-A$. Then, all three elements of $A$ must be distinct $\bmod3$. Using Proposition \ref{prop:affinenorminvariance}, we can simultaneously translate the elements of $A$ so that $0\in A$. 
    If $A$ contains a primitive element, then without loss of generality, $1\in A$, by Proposition \ref{prop:affinenorminvariance}, so that $A=\set{0,1,2}$ or $A=\set{0,1,5}$. Both are arithmetic progressions of step $1$, hence $\ol{D}_{A,A}\neq0$ in characteristic $p$, by Proposition \ref{prop:arithmeticprogression}. If $A$ does not contain a primitive element, then we must have $A=\set{0,2,4}$, which is an arithmetic progression with step $2$, and $2(a-a')\not\equiv0\bmod6$ for any $a\neq a'\in\ZZ_6$. Again, by Proposition \ref{prop:arithmeticprogression} we get $\ol{D}_{A,A}\neq0$, completing the proof.
\end{proof}

The above Proposition is the first instance where Theorem \ref{thm:inductionstep} was used as an \emph{induction step}, 
towards the goal of settling Conjecture \ref{conj:squarefree} for square-free numbers that are product of more than two primes. 
%A more general result is below:
A more general result is as follows:

\begin{prop}\label{prop:inductivetool}
    Let $N$ be square-free, and let $A,B\ssq\ZZ_N$, such that $m=\abs{A}=\abs{B}$ and $D_{A,B}\neq0$. Then, for any prime $p\nmid N$ with $p>m^{m\vphi(N)/2}$, we also have $p\nmid \Norm(D_{A,B})$, i.e. $\ol{D}_{A,B}\neq0$.
\end{prop}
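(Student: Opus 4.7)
The plan is to bound the absolute value of $\Norm(D_{A,B})$ directly, using the fact that $D_{A,B}$ is the determinant of an $m\times m$ matrix whose entries are roots of unity. Since $D_{A,B}\in\ZZ[\om]$ is an algebraic integer and is nonzero by hypothesis, its norm $\Norm(D_{A,B})$ is a nonzero rational integer. The strategy is to show $|\Norm(D_{A,B})|\leq m^{m\varphi(N)/2}$, so that any prime $p$ strictly exceeding this bound cannot divide it; by the equivalence recorded earlier (namely, $\ol{D}_{A,B}\neq0$ iff $p\nmid\Norm(D_{A,B})$), this is exactly the desired conclusion.

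First, I would invoke Hadamard's inequality: each entry of the matrix $\mc{F}_N(A,B)$ is a root of unity, so every row has Euclidean norm exactly $\sqrt{m}$, giving $|D_{A,B}|\leq m^{m/2}$. Next, I would note that every Galois conjugate $\sig(D_{A,B})$ for $\sig=\tau_k\in\Gal(\QQ(\om)/\QQ)$ (with $k\in\ZZ_N^*$) is again a determinant of an $m\times m$ matrix of $N$-th roots of unity; in fact $\sig(D_{A,B})=D_{A,kB}$ up to a sign, as discussed in the excerpt. Hence the same Hadamard bound applies to each conjugate.

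Multiplying the bounds over the $\varphi(N)$ Galois automorphisms yields
\[|\Norm(D_{A,B})|=\prod_{\sig\in\Gal(\QQ(\om)/\QQ)}|\sig(D_{A,B})|\leq\bra{m^{m/2}}^{\varphi(N)}=m^{m\varphi(N)/2}.\]
Since $D_{A,B}\neq0$, the multiplicativity of the norm (together with the fact that $\Norm$ vanishes only on zero) forces $\Norm(D_{A,B})$ to be a nonzero integer. For any prime $p\nmid N$ with $p>m^{m\varphi(N)/2}$, we therefore have $p>|\Norm(D_{A,B})|\geq1$, so $p$ cannot divide $\Norm(D_{A,B})$, and the conclusion $\ol{D}_{A,B}\neq0$ follows.

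There is no substantial obstacle here; the only mild point is verifying that the Galois conjugates of $D_{A,B}$ are indeed minors of $\mc{F}_N$ (and hence obey Hadamard's bound), but this is exactly the observation made in the background section that $\tau_k(D_{A,B})=\pm D_{A,kB}$ for $k\in\ZZ_N^*$. The bound $m^{m\varphi(N)/2}$ in the statement corresponds precisely to Hadamard applied $\varphi(N)$ times, which confirms that the intended argument is the one sketched above.
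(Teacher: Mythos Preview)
Your proposal is correct and follows essentially the same approach as the paper: both bound $|D_{A,B}|$ by Hadamard's inequality and then pass to the norm by bounding each Galois conjugate in the same way. The only cosmetic difference is that the paper packages the conjugate bound via the product $\alpha\cdot\tau(\alpha)=|\alpha|^2\leq m^m$ (with $\tau$ complex conjugation, an element of the abelian Galois group) before taking norms, whereas you apply Hadamard directly to each $\sigma(D_{A,B})=\pm D_{A,kB}$ and multiply; the resulting bound $|\Norm(D_{A,B})|\leq m^{m\varphi(N)/2}$ and the conclusion are identical.
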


\begin{proof}
    Each column of $D_{A,B}$ has length $\sqrt{m}$. Therefore, the modulus of $D_{A,B}$ is at most the product of the lengths of the columns, i.e.
    \[\abs{D_{A,B}}\leq m^{m/2}.\]
    The standard complex conjugation in $\CC$, when restricted on $\QQ(\om)$, may be identified with an element $\tau\in\Gal(\QQ(\om)/\QQ)$, namely, the one induced by $\om\mapsto \om^{-1}$. Denote $\al=D_{A,B}$; this shows that
    \[0<\al\cdot\tau(\al)\leq m^m.\]
    Taking norms, we get $0<(\Norm(\al))^2\leq m^{m\vphi(N)}$, yielding $0<\Norm(\al)\leq m^{m\vphi(N)/2}$. Thus, if $p>m^{m\vphi(N)/2}$, we get the desired result.
\end{proof}

We are now ready to prove Theorem \ref{thm:mainresult2}.

\begin{proof}[Proof of Theorem \ref{thm:mainresult2}]
    We will show inductively that all principal minors of $\mc{F}_{P_j}$ are non-zero for $1 \leq j \leq k$. 
    For $j=1$, all principal minors of $\mc{F}_{P_1} = \mc{F}_{p_1}$ are non-zero by Chebotar\"ev's theorem. 
    Now, suppose that all principal minors of $\mc{F}_{P_j}$ are non-zero for $1 \leq j \leq n$, where $n < k$.
    We will show that these principal minors are also non-zero in characteristic $p_{n+1}$, where 
    \[p_{n+1}>\bra{\frac{P_n}{2}}^{P_n\vphi(P_n)/4}.\]
    For this purpose, let $D_{A,A}$ be any minor of $\mc{F}_{P_n}$. Without loss of generality, we may assume $\abs{A} = m \leq\frac{P_n}{2}$ by the remark following Proposition \ref{prop:unitarycomplementarity}. 
    Since 
    \[p_{n+1}>\bra{\frac{P_n}{2}}^{P_n\vphi(P_n)/4} \geq m^{m\vphi(P_n)/2},\]    
    we get $p_{n+1}\nmid \Norm(D_{A,A})$ by Proposition \ref{prop:inductivetool}, so $D_{A,A}$ is also non-zero in characteristic $p_{n+1}$. Then all principal minors of $\mc{F}_{P_{n+1}}$ are non-zero by Theorem \ref{thm:inductionstep}. 
    By induction, we conclude that all principal minors of $\mc{F}_{P_j}$ are non-zero for $1 \leq j \leq k$. 
\end{proof}

We conclude this Section by presenting certain classes of non-zero principal minors $D_{A,A}$ of $\mc{F}_N$ for \emph{all} square-free $N$. Retracing the steps in the proof of Theorem \ref{thm:inductionstep}, let $D_{\mc M, \mc L}$ be an $N'$-principal minor that admits a permutation $\sig$ of $\ZZ_{N'}$ such that 
\begin{equation}\label{eq:arithmeticprincipal}
M_{\sig(0)}\geq M_{\sig(1)}\geq\dotsb\geq M_{\sig(N'-1)},    
\end{equation}
where $\sig(0),\dotsc,\sig(N'-1)$ forms an arithmetic progression in $\ZZ_{N'}$, say $\sig(j)=aj+b$ with $\gcd(a,N')=1$. Continuing with the calculation starting from \eqref{eq:Ndeterminant2} and leading to \eqref{eq:Nfinalevaluation}, we note that the matrix $Z_k=\bra{\ze^{\sig(i)\sig(j)}}_{0\leq i,j\leq k-1}$ is a standard Vandermonde matrix up to multiplication by a root of unity, with determinant given by 
\[
\begin{split}
\det Z_k
&=\ze^{\sig(0)(\sig(0)+\dotsb+\sig(k-1))} \, V_{\bs s}(\ze^{a\sig(0)},\dotsc,\ze^{a\sig(k-1)}) \\
&=\ze^{\sig(0)(\sig(0)+\dotsb+\sig(k-1))}\prod_{0\leq i<j\leq k-1}(\ze^{a\sig(j)}-\ze^{a\sig(i)}).    
\end{split}
\]
Substituting this value into the right-hand side of \eqref{eq:Nfinalevaluation}, we find that its norm is not divisible by $p$, yielding a similar contradiction and thereby establishing that $D_{\mc M,\mc L}\neq0$. 

\begin{defn}
    %Let $N=pN'$ be a square-free number where $p$ is a prime with $p \nmid N'$. 
    Let $N$ be a square-free number, and let $p$ be a prime dividing $N$, i.e., $N=pN'$ with $p \nmid N'$. 
    A minor $D_{\mc M,\mc L}$ of $\mc F_N$ is called \emph{arithmetic} $N'$-principal, if $\mc M$ satisfies \eqref{eq:arithmeticprincipal}.
\end{defn}

For this particular class of $N'$-principal minors, the condition that all principal minors of $\mc F_{N'}$ be non-zero in characteristic $p$, appearing in the statement of Theorem \ref{thm:inductionstep}, is not needed.
Consequently, we obtain the following:

\begin{thm}
    Let $N$ be a square-free number, and let $p$ be a prime dividing $N$, i.e., $N=pN'$ with $p \nmid N'$. 
    Then, any arithmetic $N'$-principal minor is non-zero.
\end{thm}
\bigskip\bigskip

\bigskip
\bigskip

\bibliographystyle{amsplain}
\bibliography{bibliography}

@Book{FR13,
	author    = {Simon Foucart and Holger Rauhut},
  title     = {A Mathematical Introduction to Compressive Sensing},
  publisher = {Birkh{\"a}user},
	address   = "Basel",
  year      = {2013},
}

@book{HJ13,
  author = "R.A. Horn and C.R. Johnson",
  title = "Matrix Analysis, second ed.",
  address = "New York",
  publisher = {Cambridge University Press},
  year = "2013"
}

@misc{CLMP25-2,
      title={Chebotar\"ev type results for composite group order, {P}art 2: natural numbers containing squares}, 
      author={Caragea, A. and Lange, F. and Lee, D. G. and Malikiosis, R. and Pfander, G. E.},
      year={2025, in prepration},
}

@misc{Lo24,
      title={{C}hebotarev's theorem for roots of unity of square free order}, 
      author={Loukaki, M.},
      year={2024},
      eprint={2412.08600},
      archivePrefix={arXiv},
      primaryClass={math.NT},
      url={https://arxiv.org/abs/2412.08600}, 
      note={\url{https://arxiv.org/abs/2412.08600}},
}

@misc{Fr03,
      title={Simple proof of {C}hebotarev's theorem on roots of unity}, 
      author={P. E. Frenkel},
      year={2003},
      eprint={math/0312398},
      archivePrefix={arXiv},
      primaryClass={math.AC},
      url={https://arxiv.org/abs/math/0312398}, 
      note={\url{https://arxiv.org/abs/math/0312398}},
}

@misc{CL24,
      title={On the principal minors of {F}ourier matrices}, 
      author={Caragea, A. and Lee, D. G.},
      year={2024},
      eprint={2409.09793},
      archivePrefix={arXiv},
      primaryClass={math.FA},
      url={https://arxiv.org/abs/2409.09793}, 
      note={\url{https://arxiv.org/abs/2409.09793}},
}

@article{CL22,
    author={Caragea, A. and Lee, D. G.},
    title={A note on exponential {R}iesz bases},
    journal={Sampling Theory, Signal Processing, and Data Analysis},
    year={2022},
    month={Jul},
    day={28},
    volume={20},
    number={2},
    pages={13}, 
    issn={2730-5724},
    doi={10.1007/s43670-022-00031-9},
    url={https://doi.org/10.1007/s43670-022-00031-9}
}

@article {Chebotarev,
    AUTHOR = {Tschebotareff, N.},
     TITLE = {Die {B}estimmung der {D}ichtigkeit einer {M}enge von
              {P}rimzahlen, welche zu einer gegebenen {S}ubstitutionsklasse
              geh\"oren},
   JOURNAL = {Math. Ann.},
  FJOURNAL = {Mathematische Annalen},
    VOLUME = {95},
      YEAR = {1926},
    NUMBER = {1},
     PAGES = {191--228},
      ISSN = {0025-5831,1432-1807},
   MRCLASS = {99-04},
       DOI = {10.1007/BF01206606},
       URL = {https://doi.org/10.1007/BF01206606},
}

@article {DVB08,
    AUTHOR = {Delvaux, S. and Van Barel, M.},
     TITLE = {Rank-deficient submatrices of {F}ourier matrices},
   JOURNAL = {Linear Algebra Appl.},
  FJOURNAL = {Linear Algebra and its Applications},
    VOLUME = {429},
      YEAR = {2008},
    NUMBER = {7},
     PAGES = {1587--1605},
      ISSN = {0024-3795,1873-1856},
   MRCLASS = {15A23},
  MRNUMBER = {2444345},
MRREVIEWER = {Abbas\ Salemi},
       DOI = {10.1016/j.laa.2008.04.043},
       URL = {https://doi.org/10.1016/j.laa.2008.04.043},
}

@article {GenVandermonde,
    AUTHOR = {Evans, R. J. and Isaacs, I. M.},
     TITLE = {Generalized {V}andermonde determinants and roots of unity of
              prime order},
   JOURNAL = {Proc. Amer. Math. Soc.},
  FJOURNAL = {Proceedings of the American Mathematical Society},
    VOLUME = {58},
      YEAR = {1976},
     PAGES = {51--54},
      ISSN = {0002-9939,1088-6826},
   MRCLASS = {15A15},
MRREVIEWER = {W.\ Ledermann},
       DOI = {10.2307/2041358},
       URL = {https://doi.org/10.2307/2041358},
}

@article {KurshanOdlyzko,
    AUTHOR = {Kurshan, R. P. and Odlyzko, A. M.},
     TITLE = {Values of cyclotomic polynomials at roots of unity},
   JOURNAL = {Math. Scand.},
  FJOURNAL = {Mathematica Scandinavica},
    VOLUME = {49},
      YEAR = {1981},
    NUMBER = {1},
     PAGES = {15--35},
      ISSN = {0025-5521,1903-1807},
   MRCLASS = {12A35 (12E10)},
MRREVIEWER = {Lawrence\ Washington},
       DOI = {10.7146/math.scand.a-11919},
       URL = {https://doi.org/10.7146/math.scand.a-11919},
}

@book {Marcus,
    AUTHOR = {Marcus, D. A.},
     TITLE = {Number fields},
    SERIES = {Universitext},
   EDITION = {Second},
      NOTE = {With a foreword by Barry Mazur},
 PUBLISHER = {Springer, Cham},
      YEAR = {2018},
     PAGES = {xviii+203},
      ISBN = {978-3-319-90232-6; 978-3-319-90233-3},
   MRCLASS = {11-01 (11Rxx 11Txx 12-01)},
       DOI = {10.1007/978-3-319-90233-3},
       URL = {https://doi.org/10.1007/978-3-319-90233-3},
}

@article {TaoUncertainty05,
    AUTHOR = {Tao, T.},
     TITLE = {An uncertainty principle for cyclic groups of prime order},
   JOURNAL = {Math. Res. Lett.},
  FJOURNAL = {Mathematical Research Letters},
    VOLUME = {12},
      YEAR = {2005},
    NUMBER = {1},
     PAGES = {121--127},
      ISSN = {1073-2780},
   MRCLASS = {11B75},
MRREVIEWER = {Ben\ Joseph\ Green},
       DOI = {10.4310/MRL.2005.v12.n1.a11},
       URL = {https://doi.org/10.4310/MRL.2005.v12.n1.a11},
}

@book {washington,
    AUTHOR = {Washington, L. C.},
     TITLE = {Introduction to cyclotomic fields},
    SERIES = {Graduate Texts in Mathematics},
    VOLUME = {83},
   EDITION = {Second},
 PUBLISHER = {Springer-Verlag, New York},
      YEAR = {1997},
     PAGES = {xiv+487},
      ISBN = {0-387-94762-0},
   MRCLASS = {11R18 (11-01 11-02 11R23)},
MRREVIEWER = {T.\ Mets\"ankyl\"a},
       DOI = {10.1007/978-1-4612-1934-7},
       URL = {https://doi.org/10.1007/978-1-4612-1934-7},
}

@article {pChebotarev,
    AUTHOR = {Zhang, G.},
     TITLE = {On the {C}hebotar\"ev theorem over finite fields},
   JOURNAL = {Finite Fields Appl.},
  FJOURNAL = {Finite Fields and their Applications},
    VOLUME = {56},
      YEAR = {2019},
     PAGES = {97--108},
      ISSN = {1071-5797,1090-2465},
   MRCLASS = {15B33 (94B05)},
MRREVIEWER = {Martin\ Mereb},
       DOI = {10.1016/j.ffa.2018.11.004},
       URL = {https://doi.org/10.1016/j.ffa.2018.11.004},
}

@misc{EK25,
      title={Real and finite field versions of {C}hebotar\"ev's theorem}, 
      author={Emmrich, T. and Kunis, S.},
      year={2025},
      eprint={2506.02947},
      archivePrefix={arXiv},
      primaryClass={math.NA},
      url={https://arxiv.org/abs/2506.02947}, 
      note={\url{https://arxiv.org/abs/2506.02947}},
}

@article {CMN24,
    AUTHOR = {Cabrelli, C. and Molter, U. and Negreira, F.},
     TITLE = {Weaving {R}iesz bases},
   JOURNAL = {J. Fourier Anal. Appl.},
  FJOURNAL = {The Journal of Fourier Analysis and Applications},
    VOLUME = {31},
      YEAR = {2025},
    NUMBER = {1},
     PAGES = {Paper No. 4, 20},
      ISSN = {1069-5869,1531-5851},
   MRCLASS = {42C15 (47A15 94A20)},
  MRNUMBER = {4837946},
       DOI = {10.1007/s00041-024-10134-7},
       URL = {https://doi.org/10.1007/s00041-024-10134-7},
}

\end{document}